\def\cal{\mathcal}
\def\Bbb{\mathbb}
\def\r{\rangle}
\def\l{\langle}
\def\icq{$c$\hyp{}quasifibration}
\def\kaq{$k$\hyp{}almost\hyp{}quasifibration}
\def\kcq{$k$\hyp{}$c$\hyp{}quasifibration}
\def\icd{{}$c$\hyp{}distinguished}
\def\kcd{$k$\hyp{}$c$\hyp{}distinguished}
\def\kaqs{$k$\hyp{}almost\hyp{}quasifibrations}
\def\kcqs{$k$\hyp{}$c$\hyp{}quasifibrations}
\def\icd{$c$\hyp{}distinguished}
\def\kcd{$k$\hyp{}$c$\hyp{}distinguished}
\def\cwe{$c$\hyp{}weak equivalence}
\def\wt{\widetilde}
\def\peq{\preccurlyeq}
\newtheorem{thm}{Theorem}[section]
\newtheorem{prop}[thm]{Proposition}
\newtheorem{exm}[thm]{Example}
\newtheorem{lemma}[thm]{Lemma}
\newtheorem{cor}[thm]{Corollary}
\newtheorem{defn}[thm]{Definition}
\newtheorem{rem}[thm]{Remark}
\newtheorem*{maint}{Main Theorem}
\numberwithin{equation}{section}
\begin{document}
\date{\today}
\title[$k$\hyp{}almost\hyp{}quasifibrations]
{$k$\hyp{}almost\hyp{}quasifibrations}
\author[S.K. Roushon]{S.K. Roushon}
\address{School of Mathematics\\
Tata Institute\\
Homi Bhabha Road\\
Mumbai 400005, India}
\email{roushon@math.tifr.res.in} 
\urladdr{http://www.math.tifr.res.in/\~\!\!\! roushon/}
\begin{abstract} In \cite{Rou22} we introduced the notion of a 
  $k$\hyp{}almost\hyp{}quasifibration. In this article we update this definition
  and call it a $k$\hyp{}$c$\hyp{}{\it quasifibration}. This will  
  help us to relate it to quasifibrations. We
  study some basic properties of \kcqs.
  We also generalize a series of results on quasifibrations (\cite{DT58}) to
  \kcqs\ giving criteria for a map to be a \kcq.
  \end{abstract}
 
\keywords{Quasifibration, homotopy group.}

\subjclass[2020]{Primary 55Q05; Secondary 55P05}
\maketitle

\section{Introduction}
Recall that a surjective map $f:X\to Y$ is called a {\it quasifibration}
(\cite{DT58}, [\cite{Hat03}, chap 4, p. 479]) if for all $y\in Y$ and
$x\in F_y$, the map $f:(X, F_y, x)\to (Y,y)$ is a weak 
equivalence. Hence, a quasifibration $f:X\to Y$ induces a long exact sequence of homotopy
groups for all $y\in Y$.
In \cite{Rou22} we introduced the notion of a \kaq\  
for path connected spaces. For $k=\infty$, we called it an
{\it almost\hyp{}quasifibration}. It says that for some
$y\in Y$ there exist an exact sequence of homotopy groups of the above type. Hence
a quasifibration $f:X\to Y$ is an almost\hyp{}quasifibration.

The main motivation behind the definition of a \kaq\ was that
for computational purposes of homotopy groups, we need the long exact
sequence of homotopy groups induced by $f$, for some $y\in Y$, instead
of for all the points of $Y$.
Also, since we constructed a class of examples
in \cite{Rou22} supporting this definition. Furthermore, we had 
given many examples in \cite{Rou22} of
$1$\hyp{}almost\hyp{}quasifibrations and
almost\hyp{}quasifibrations 
which are not quasifibrations. 

In this article we make a general definition of \kaq,\ 
and add an extra condition. We call it
a {\it \kcq}, and for $k=\infty$, we call it a {\it \icq}. This extra
condition helps us to make the concept functorial
and to give a necessary and sufficient condition
for an almost\hyp{}quasifibration to be
a quasifibration. We also observe here that the examples of \kaqs\ we gave in 
\cite{Rou22} are all \kcqs .

Here, we plan to study some basic properties of
\kcqs . We prove several criteria for a
map to be a \kcq . These results are analogous to the fundamental results proved
in \cite{DT58} in the
context of quasifibrations. The methods used in the proofs of our
results are not new, but we see that they are applicable in the case
of \kcqs\ also.

Throughout the paper for topological spaces $X$ and $Y$,
$f:X\to Y$ will always denote a surjective continuous map.

\begin{defn}{\rm A subset of $Y$ is called {\it exhaustive} if it has a nonempty
intersection with each path component of $Y$.}\end{defn}

\begin{defn}\label{AQ}{\rm Let $k\geq 0$ be an integer. Choose
    an exhaustive subset $\wt Y$ of $Y$. 
     Then, $f$ is called
     a $k$-{\it almost\hyp{}quasifibration with respect to} $\wt Y$,
     if for all $y\in \wt Y$ and $x\in F_y:=f^{-1}(y)$
     there are homomorphisms

     $$\partial:\pi_{q+1}(Y, y)\to \pi_q(F_y, x),$$ for $q=0,1,2,\ldots , k-1$,
     so that the following sequence is exact.
     
\centerline{
      \xymatrix@C-=0.7cm{1\ar[r]&\pi_k(F_y, x)\ar[r]^{\!\!\!\!
          i_*}&
        \pi_k(X, x)\ar[r]^{\!\!\! f_*}&
        \pi_k(Y, y)\ar[r]^{\!\!\!\! \partial}&
        \pi_{k-1}(F_y, x)\ar[r]&
        \cdots}}
\centerline{
  \xymatrix@C-=0.7cm{\cdots\ar[r]&\pi_1(Y, y)\ar[r]^{\partial}&
    \pi_0(F_y, x)\ar[r]^{\!\!\!\!
      i_*}&\pi_0(X, x)\ar[r]^{\!\!\! f_*}&
    \pi_0(Y, y)\ar[r]&1.}}

\noindent
$f$ is called an {\it almost\hyp{}quasifibration}
if the above sequence can be extended on the
left up to infinity.}\end{defn}

Denote the path components
    of $Y$ by  $C_{\alpha}, \alpha\in I$.
    Let $X_{\alpha}=f^{-1}(C_{\alpha})$.
It is clear that $f$ is a $k$-almost\hyp{}quasifibration with respect
to an exhaustive subset $\wt Y$ if and only if
for all $\alpha\in I$, $f|_{{X_\alpha}}:X_{\alpha}\to C_{\alpha}$ is a
$k$-almost\hyp{}quasifibration with respect to $\wt Y\cap C_{\alpha}$.

In Definition \ref{AQ} we only assumed the existence of the connecting
homomorphisms $\partial$. In Lemma \ref{NASC} we observe
that the commutativity of 
Diagram 1 is the key connection between an almost\hyp{}quasifibration and a
quasifibration. In the diagram, $\partial_*$ is the connecting homomorphism coming from
the long exact sequence of homotopy groups for the pair $(X, F_y)$. The commutativity
of Diagram 1 makes $f_*$ a bijection and hence $\partial$ becomes unique.

\centerline{
       \xymatrix{&\ar[ld]_{f_*}\pi_{q+1}(X, F_y, x)\ar[d]^{\partial_*}\\
         \pi_{q+1}(Y,y)\ar[r]^{\partial}&\pi_q(F_y, x).}}

     \centerline{Diagram 1}
     
    Therefore, we now make the following definition. 

   \begin{defn}\label{KQ}{\rm A $k$-almost\hyp{}quasifibration $f:X\to Y$
       with respect to an exhaustive subset $\wt Y \subset Y$ is called a
       $k$\hyp{}$c$\hyp{}{\it quasifibration} with respect to
       $\wt Y$, if Diagram 1 
       is commutative for $q=0,1,2,\ldots , k-1$, for all $y\in
       \wt Y$ and $x\in F_y$. When $k=\infty$ we call $f$ a {\it \icq}.}\end{defn}
   
  Therefore, we observe that a \icq\ $f:X\to Y$
     with respect to $Y$ is a genuine quasifibration in the sense of
     \cite{DT58} (Corollary \ref{relation}). And the fibers of a
     \icq\ with respect to $\wt Y$, over the points of $\wt Y$ are all
     weak homotopy equivalent.
     We also show in Lemma \ref{connection} that the examples of
     $k$\hyp{}almost\hyp{}quasifibrations, (for $k=1, \infty$) we
     gave in \cite{Rou22} are all $k$\hyp$c$\hyp{}quasifibrations
     (see Example \ref{example}).

     In the next section we state our main results. In Section 3
     we relate quasifibrations and \kcqs\ and prove some basic results
     we need. Section 4 contains the proofs
     of the main results.
     
\section{Statements of main results}

Recall that for a surjective map $f:X\to Y$, a subset $Y_1\subset Y$ is called
distinguished (\cite{DT58}) if $f|_{f^{-1}(Y_1)}:f^{-1}(Y_1)\to Y_1$ is a
quasifibration. We need the following analogue of this definition, in the
context of \kcqs.

\begin{defn}\label{dist} {\rm A subset $Y_1\subset Y$ is called
     ($k$\hyp{})$c$\hyp{}{\it distinguished} with respect to an exhaustive
     subset $\wt Y_1\subset Y_1$, if $f|_{f^{-1}(Y_1)}:f^{-1}(Y_1)\to Y_1$ is a 
     ($k$\hyp{})$c$\hyp{}quasifibration with respect to
     $\wt Y_1$.}\end{defn}
 
 We begin with the following result giving a criterion for a map
 $f$ to be a \icq\ from local data.
 
\begin{thm}\label{covering} Let $\{U_{\alpha}\}_{\alpha\in J}$
  be an open covering of $Y$, which is closed under taking finite intersections.
  Assume that, for any $\alpha\in J$, there is an exhaustive subset 
  $\wt U_{\alpha}\subset U_{\alpha}$, such that  $\wt
  U_{\beta}\subset \wt U_{\gamma}$ whenever   $U_{\beta}\subset
  U_{\gamma}$, for $\beta, \gamma\in J$. Furthermore, assume that $U_{\alpha}$ is \icd\  
  with respect to $\wt U_{\alpha}$, for any $\alpha\in J$.
  Then $f$ is a \icq\ with respect to some exhaustive subset of
  $Y$.\end{thm}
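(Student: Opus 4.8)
The plan is to imitate the way Dold and Thom pass from a distinguished open cover to a global quasifibration in \cite{DT58}, organised in three layers: a two-piece gluing lemma, its iteration to finite unions, and a transfinite recursion reaching the whole cover. Throughout, one carries along two extra pieces of data beyond what is needed in \cite{DT58} — the exhaustive subsets (whose coherence is exactly what the monotonicity hypothesis $\wt U_\beta\subset\wt U_\gamma$ for $U_\beta\subset U_\gamma$ is for) and the commutativity of Diagram 1 from Definition \ref{KQ}.

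First I would prove the two-piece case: if $U,V\subset Y$ are open, $U$ is \icd\ with respect to $\wt U$, $V$ with respect to $\wt V$, and $U\cap V$ with respect to an exhaustive subset $\wt W$ with $\wt W\subset\wt U\cap\wt V$, then $U\cup V$ is \icd\ with respect to $\wt U\cup\wt V$. This is the crux and the step I expect to wrestle with. The plan is to transplant the proof of the union theorem of \cite{DT58} — which proceeds through their fibrewise deformation lemma, built here from a function $U\cup V\to[0,1]$ separating $U\setminus V$ from $V\setminus U$ and the distinguished data over $V$ and $U\cap V$ — and then to compare, by a five-lemma argument, the long exact sequences furnished over $U$, $V$ and $U\cap V$, so as to obtain the exact sequence for $f^{-1}(U\cup V)\to U\cup V$ at every point of $\wt U\cup\wt V$. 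The genuinely new point, invisible in \cite{DT58} where only exactness is at stake, is that the connecting homomorphisms obtained this way must still fit into a commutative Diagram 1; one has to assemble a single commuting triangle over $U\cup V$ from the three given ones over $U$, $V$, $U\cap V$, and it is exactly to have this input available that the extra condition of Definition \ref{KQ} was imposed. The inclusion $\wt W\subset\wt U\cap\wt V$, which holds with $\wt W=\wt{U\cap V}$ by the monotonicity assumption since $U\cap V\subset U$ and $U\cap V\subset V$, is what lets the three long exact sequences be compared at one common basepoint.

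Granting the two-piece lemma, iterating it and using closure of the cover under finite intersections shows that every finite union $U_{\alpha_1}\cup\cdots\cup U_{\alpha_n}$ is \icd\ with respect to $\wt U_{\alpha_1}\cup\cdots\cup\wt U_{\alpha_n}$: write it as $(U_{\alpha_1}\cup\cdots\cup U_{\alpha_{n-1}})\cup U_{\alpha_n}$, observe its intersection with $U_{\alpha_n}$ is $\bigcup_{i<n}(U_{\alpha_i}\cap U_{\alpha_n})$, a union of $n-1$ members of the cover and hence \icd\ by induction, and apply the lemma. For a general cover I would induct on $|J|$ (the countable case being covered by the finite-union result together with a sequential increasing-union argument). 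Well-order $J$ and let $W_\mu$ be the union of the first $\mu$ members of the cover, so that $W_\mu$ increases continuously to $Y$; one shows each $W_\mu$ is \icd, with respect to the corresponding union of the $\wt U_\alpha$, by transfinite induction on $\mu$. At a successor $\mu=\nu+1$ one has $W_\mu=W_\nu\cup U_{j_\nu}$, with $W_\nu$ \icd\ by the inner induction and $U_{j_\nu}$ \icd\ by hypothesis; and $W_\nu\cap U_{j_\nu}=\bigcup_{\rho<\nu}(U_{j_\rho}\cap U_{j_\nu})$ is covered by the finite-intersection closure of $\{U_{j_\rho}\cap U_{j_\nu}\}_{\rho<\nu}$ inside the cover, again an open cover closed under finite intersections with \icd\ members and coherent exhaustive subsets, of cardinality smaller than $|J|$, so the outer induction hypothesis makes $W_\nu\cap U_{j_\nu}$ \icd\ and the two-piece lemma yields $W_\mu$. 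At a limit $\mu$, $W_\mu$ is a directed union of open \icd\ subsets, handled by the analogue for \icqs\ of the increasing-union criterion of \cite{DT58}: since the $W_\nu$ are open, every sphere, disc or homotopy into $W_\mu$ or into $f^{-1}(W_\mu)$ lands in some $W_\nu$, so homotopy groups, exact sequences and the commuting Diagram 1 all pass to the filtered colimit. Finally $Y=\bigcup_\mu W_\mu$ is \icd\ with respect to $\bigcup_\alpha\wt U_\alpha$, an exhaustive subset because the $U_\alpha$ cover $Y$ and each $\wt U_\alpha$ meets every path component of $U_\alpha$, hence of $Y$.

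The main obstacle is the two-piece lemma, and within it the verification that the glued connecting homomorphisms keep Diagram 1 commutative: in \cite{DT58} only exactness must survive the deformation argument, whereas here the uniqueness and functoriality packaged into Definition \ref{KQ} have to be maintained as well, and pasting one commuting triangle over $U\cup V$ from three is the delicate point. A lesser nuisance is phrasing the increasing-union step for directed rather than merely sequential unions and checking that the exhaustive subsets stay nested all along the recursion; both are routine once the two-piece lemma is in hand, the nesting being precisely what the hypothesis $\wt U_\beta\subset\wt U_\gamma$ provides.
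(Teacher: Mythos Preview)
Your overall architecture---a two-piece gluing lemma, then a transfinite passage to the full cover---matches the paper's. The execution differs at both stages, and the paper's route dissolves exactly the difficulty you flag as ``the delicate point''.

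For the two-piece lemma the paper does not construct connecting homomorphisms and then separately verify Diagram~1, as you propose. Instead it first proves (Lemma~\ref{criterio}) that $f$ is a \icq\ with respect to $\wt Y$ if and only if $f_*:\pi_q(X,F_y,x)\to\pi_q(Y,y)$ is a bijection for every $q$ and every $y\in\wt Y$; equivalently, $f:(X,F_y)\to(Y,y)$ is a weak equivalence of pairs at the selected basepoints. Under this reformulation the commutativity of Diagram~1 is automatic---one sets $\partial:=\partial_*\circ f_*^{-1}$---so there is nothing to ``paste'' and your phantom obstacle vanishes. The two-piece result (Proposition~\ref{coveringprop}) is then proved without any separating function or deformation: a triples-exact-sequence argument (Lemma~\ref{funda}) converts ``$Y_1$ is \icd'' into ``$f:(X,f^{-1}(Y_1))\to(Y,Y_1)$ is a \cwe'', and an excision result for maps of triads (Lemma~\ref{n-equiv}, resting on Proposition~4K.1 of \cite{Hat03}) glues the two pieces.

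For the global step the paper uses Zorn's lemma rather than your transfinite induction on $|J|$: one partially orders the set of \icd\ unions of cover members, shows chains have upper bounds via a compact-subsets-land-in-one-member colimit lemma (Lemma~\ref{directlimit}, essentially your limit step), and argues that a maximal element must be all of $Y$ by adjoining one more $U_\alpha$ via Proposition~\ref{coveringprop}. Your transfinite induction is a legitimate alternative, and your directed-union step is Lemma~\ref{directlimit} in disguise, but Zorn avoids the nested induction and the separate treatment of finite unions.

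In short, your plan would likely go through, but you are working harder than necessary: the equivalent characterisation via relative homotopy groups is the missing simplification.
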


 Now, recall that a {\it filtration} of a space $Y$ is an increasing 
 sequence of subspaces $Y_0\subset Y_1\subset\cdots$ such that
 $Y=\cup_{i=0}^{i=\infty}Y_i$, and $Y$ has the colimit topology.

 For each $i\in {\Bbb N}$, let $\wt Y_i\subset Y_i$ be an exhaustive subset of $Y_i$, such
 that $\wt Y_i\subset \wt Y_{i+1}$. Then clearly, the union
 $\wt Y:= \cup _{i\in {\Bbb N}}\wt Y_i$ is an exhaustive subset of
 $Y$. We call $\wt Y$ an {\it exhaustive subset of $Y$ for 
   the filtration} $\{Y_i\}_{i\in {\Bbb N}}$.

 More generally, for any
 covering $\{Y_{\alpha}\}_{\alpha\in J}$ of $Y$ by subsets with
 exhaustive subsets $\wt Y_{\alpha}\subset Y_{\alpha}$, for $\alpha\in J$,
 $\wt Y:=\cup_{\alpha\in J}\wt Y_{\alpha}$ is an exhaustive subset of $Y$.

 The following result shows that being a 
 $k$\hyp{}$c$\hyp{}quasifibration
 is preserved under taking colimit.

 \begin{thm}\label{limit} Let $Y$, $Y_i$ and $\wt Y_i$ be as above.
   Assume that, for each $i$, $Y_i$ is $T_1$ and \kcd\ 
   with respect to the exhaustive subset $\wt Y_i$. Then, $f$ is a 
   \kcq\ with respect to $\wt Y$.\end{thm}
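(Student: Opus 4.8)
The plan is to verify Definition~\ref{KQ} for $f$ directly, exploiting that homotopy groups commute with the filtered colimit $Y=\varinjlim_i Y_i$. Fix $y\in\wt Y$ and $x\in F_y$. Since $\wt Y=\bigcup_i\wt Y_i$ and $\wt Y_i\subseteq\wt Y_{i+1}$, there is an $i_0$ with $y\in\wt Y_i$ for all $i\ge i_0$; for such $i$ set $X_i=f^{-1}(Y_i)$ and $f_i=f|_{X_i}\colon X_i\to Y_i$, and note that $F_y\subseteq X_i$ is exactly the fibre of $f_i$ over $y$. By hypothesis each $Y_i$ is \kcd\ with respect to $\wt Y_i$, so by Definition~\ref{dist} the map $f_i$ is a \kcq\ with respect to $\wt Y_i$; hence for $i\ge i_0$ and $q=0,\dots,k-1$ we have the exact sequence of Definition~\ref{AQ} for $(X_i,F_y,x)$ together with a connecting map $\partial^i\colon\pi_{q+1}(Y_i,y)\to\pi_q(F_y,x)$ satisfying $\partial^i\circ f_{i*}=\partial_*$, where $\partial_*$ is the boundary map of the pair $(X_i,F_y)$ (the analogue of Diagram~1 for $f_i$); by Lemma~\ref{NASC} this $\partial^i$ is the \emph{unique} such map and $f_{i*}\colon\pi_{q+1}(X_i,F_y,x)\to\pi_{q+1}(Y_i,y)$ is a bijection.

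The first ingredient is a Dold--Thom style compactness lemma: since $Y$ carries the colimit topology of the $Y_i$ and each $Y_i$ is $T_1$, every compact subset of $Y$ is contained in some $Y_i$. (If not, choose points $y_n$ of the set lying in strictly increasing stages; because each $Y_j$ is $T_1$, every subset of $\{y_n\}$ meets each $Y_j$ in a finite, hence closed, set, so $\{y_n\}$ is closed and discrete in $Y$, contradicting that it lies in a compact set.) As $f$ is continuous, it follows that every compact subset of $X$ also lies in some $X_i$, and hence every map of a sphere $S^n$ or of a relative cube $(I^n,\partial I^n,J^{n-1})$, and every homotopy between two such, into $X$ or $Y$ factors through a finite stage. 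Consequently, with the colimits taken over $i\ge i_0$ and transition maps induced by inclusions,
\[
  \pi_n(Y,y)=\varinjlim_i\pi_n(Y_i,y),\quad
  \pi_n(X,x)=\varinjlim_i\pi_n(X_i,x),\quad
  \pi_n(X,F_y,x)=\varinjlim_i\pi_n(X_i,F_y,x)
\]
for all $n$, while $\pi_n(F_y,x)$ is constant in $i$.

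Now I assemble the pieces. Naturality of the long exact sequence of a pair and of $f_*$ with respect to the inclusions $(X_i,F_y)\hookrightarrow(X_{i+1},F_y)$, together with the uniqueness of the $\partial^i$ furnished by the $c$\hyp{}condition, shows that the $\partial^i$ are compatible with the transition maps of the colimit; hence $\partial:=\varinjlim_i\partial^i\colon\pi_{q+1}(Y,y)\to\pi_q(F_y,x)$ is well defined for $q=0,\dots,k-1$. The exact sequences of Definition~\ref{AQ} for the $f_i$ ($i\ge i_0$) form a filtered system, and since filtered colimits of exact sequences of groups and pointed sets are again exact, the colimit sequence is exact; by the identifications above it is precisely the sequence required of $f$ in Definition~\ref{AQ}, so $f$ is a $k$\hyp{}almost\hyp{}quasifibration with respect to $\wt Y$. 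Finally, taking the colimit of the identities $\partial^i\circ f_{i*}=\partial_*$ gives $\partial\circ f_*=\partial_*$ for $f$, i.e.\ Diagram~1 commutes for $q=0,\dots,k-1$; hence $f$ is a \kcq\ with respect to $\wt Y$. The case $k=\infty$ is identical, the colimit argument being insensitive to the length of the sequences.

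The step I expect to be the main obstacle is the topological one: verifying that the \emph{relative} groups $\pi_n(X,F_y,x)$ — not merely $\pi_n(X,x)$ and $\pi_n(Y,y)$ — are the colimits of their stagewise counterparts, which requires the compactness lemma to apply both to maps and to homotopies, and is where the $T_1$ hypothesis on the $Y_i$ (and, through continuity of $f$, on the relevant compacta of $X$) is genuinely used. Everything downstream is naturality and the exactness of filtered colimits, the point of the extra $c$\hyp{}condition being exactly to make the stagewise connecting homomorphisms canonical enough to be patched together.
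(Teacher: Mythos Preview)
Your proof is correct and follows essentially the same approach as the paper: the paper deduces the theorem as an immediate application of its Lemma~\ref{directlimit}, whose proof is precisely the colimit argument you write out inline (compactness lemma using the $T_1$ hypothesis, identification of the homotopy groups of $X$, $Y$ and the pairs $(X_i,F_y)$ as filtered colimits of the stagewise groups, compatibility of the stagewise connecting maps via naturality of $\partial_*$ and $f_*$, and passage to the limit in the exact sequences and in Diagram~1). The only cosmetic difference is that the paper packages this as a separate lemma valid for any totally ordered covering with the compactness property, while you specialise it directly to the filtration $\{Y_i\}$.
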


 In the next result we show that being a $k$\hyp{}$c$\hyp{}quasifibration is
 also preserved under deformation. For this, we need the
 following two definitions.

First, we make a variation
of the definition of a $k$-equivalence.

\begin{defn}\label{kcequiv} {\rm For pairs $(X,X_1)$ and $(Y,Y_1)$ of topological spaces, 
  a map $g:(X,X_1)\to (Y,Y_1)$ is called a
  $k$\hyp{}$c$\hyp{}{\it equivalence} with respect to a subset
  $\wt X_1\subset X_1$, if the following conditions are satisfied.

$\bullet$ $g_*^{-1}Image(\pi_0(Y_1)\to \pi_0(Y))=Image(\pi_0(X_1)\to \pi_0(X))$.

$\bullet$ For all $x\in \wt X_1$,  $g_*:\pi_q(X,X_1,x)\to \pi_q(Y,Y_1,g(x))$ is a bijection for
$1\leq q\leq k-1$, and is a surjection for $q=k$.

$g$ is called a $c$\hyp{}{\it weak equivalence} with respect to $\wt X_1$,
if the above conditions are satisfied for
all $k$.}\end{defn}

We need this variation, since
we will be considering only selected fibers of $f:X\to Y$ in this
work. Here, notice that a $k$\hyp{}$c$\hyp{}equivalence is defined with
respect to a subset of the domain, and a \kcq\ is defined
with respect to a subset of the codomain.

\begin{rem}\label{kce}{\rm Recall that a $k$-equivalence demands
    that the second condition in Definition \ref{kcequiv} must be satisfied
for all $x\in X_1$. If $\wt X_1$ is an exhaustive subset of $X_1$, then it
has nonempty intersection with each path component
of $X_1$, and hence a $k$\hyp{}$c$\hyp{}equivalence with respect to an exhaustive subset,
is a $k$\hyp{}equivalence.
Similarly a $c$\hyp{}weak equivalence with respect to an exhaustive subset 
is a weak equivalence in the standard sense.}\end{rem}

 \begin{defn}\label{deformation}{\rm  Let $Y_1\subset Y$ be a subset.
     Let $X_1=f^{-1}(Y_1)$. A {\it deformation}
 of $f:X\to Y$ to $f_1:=f|_{X_1}:X_1\to Y_1$ is a pair $(H, h)$ of maps defined by  
 $H:X\times I\to X$ and $h:Y\times I\to Y$, such
 that the following conditions are satisfied.

 $\bullet$ $h|_{Y\times 0}=id_Y$, $h_t(y_1):=h(t,y_1)=y_1$ for all $y_1\in Y_1$,
 $t\in I$ and $h_1(y)\in Y_1$ for all $y\in Y$.

 $\bullet$ $H|_{X\times 0}=id_X$, $H_t(x_1):=H(t,x_1)=x_1$ for all $x_1\in X_1$,
 $t\in I$ and $H_1(x)\in X_1$ for all $x\in X$.

 $\bullet$ $f\circ H_1=h_1\circ f_1$.}\end{defn}

We are now in a position to state our next result.

\begin{thm}\label{deformationthm} Let $Y_1\subset Y$ be a subset and
     $\wt Y\subset Y$, $\wt Y_1\subset Y_1$ be 
     exhaustive subsets, such that $\wt Y_1\subset \wt Y$.
     Then $f:X\to Y$ is a  
     \icq\ with respect to $\wt Y$ if the following are satisfied.

     $\bullet$ There is a deformation $(H, h)$ from $f$ to $f_1$ such
     that $h_1(\wt Y)\subset \wt Y_1$.

$\bullet$ $H_1:f^{-1}(y)\to f^{-1}(h_1(y))$ is a weak equivalence for all $y\in \wt Y$.

$\bullet$ $Y_1$ is \icd\ with respect to $\wt Y_1$.
\end{thm}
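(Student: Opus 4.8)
The plan is to reduce, via the deformation $(H,h)$, the verification of the \icq\ conditions for $f$ at a point $y\in\wt Y$ to the corresponding conditions for $f_1$ at the point $h_1(y)\in\wt Y_1$, where they hold because $Y_1$ is \icd\ with respect to $\wt Y_1$. First I would fix $y\in\wt Y$ and $x\in F_y=f^{-1}(y)$, and set $y'=h_1(y)\in\wt Y_1$, $x'=H_1(x)\in F_{y'}=f^{-1}(y')=f_1^{-1}(y')$. The map $H$ gives a homotopy $H_t:(X,F_y,x)\to (X,?,?)$; the first bullet guarantees $h_1(\wt Y)\subset\wt Y_1$ so that $y'$ lands in the good set, and the third bullet supplies the long exact sequence and the commutativity of Diagram 1 for $f_1$ at $y'$. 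The key diagram to assemble is the ladder relating the triple $(X,F_y,x)$ under $f$ to the triple $(X_1,F_{y'},x')$ under $f_1$, with vertical maps induced by $H_1$ and $h_1$.

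Next I would establish the two horizontal isomorphisms needed to transport the exact sequence. On the total-space side, $H_1:X\to X_1$ together with $h_1:Y\to Y_1$ is, by a standard argument, a homotopy equivalence of the relevant pairs: $H|_{X_1\times I}$ deformation-retracts $X$ onto $X_1$ (it is stationary on $X_1$ and lands in $X_1$ at time $1$), and likewise $h$ retracts $Y$ onto $Y_1$; hence $H_{1*}:\pi_q(X,x)\to\pi_q(X_1,x')$ and $h_{1*}:\pi_q(Y,y)\to\pi_q(Y_1,y')$ are isomorphisms for all $q$, and similarly $H_{1*}:\pi_q(X,F_y,x)\to\pi_q(X_1,F_{y'},x')$ is an isomorphism using that $H_1(F_y)\subset F_{y'}$ (which follows from $f\circ H_1=h_1\circ f_1$ applied on $X_1$, combined with $h_1(y)=y'$; more carefully one checks $f(H_1(F_y))=\{y'\}$). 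On the fiber side, the second bullet says precisely $H_1:F_y\to F_{y'}$ is a weak equivalence, so $H_{1*}:\pi_q(F_y,x)\to\pi_q(F_{y'},x')$ is an isomorphism. Now the naturality of the homotopy long exact sequence of a pair gives a commuting ladder between the $(X,F_y)$-sequence and the $(X_1,F_{y'})$-sequence in which every vertical arrow except possibly those involving $\pi_*(Y,y)\to\pi_*(Y_1,y')$ is already known to be iso; but $h_{1*}$ there is iso too, so once I transport the connecting maps, the five lemma (or direct diagram chase) yields exactness of the desired sequence for $f$ at $y$ with $\partial:=H_{1*}^{-1}\circ\partial^{f_1}\circ h_{1*}$, where $\partial^{f_1}$ is the connecting map witnessing that $f_1$ is a \icq\ at $y'$.

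Then I would check that this $\partial$ makes Diagram 1 commute for $f$ at $y$. By Definition \ref{KQ} the content is that $\partial$ equals $\partial_*\circ f_*^{-1}$, where $\partial_*:\pi_{q+1}(X,F_y,x)\to\pi_q(F_y,x)$ is the topological connecting map and $f_*:\pi_{q+1}(X,F_y,x)\to\pi_{q+1}(Y,y)$; for $f_1$ the analogous statement holds at $y'$. The point is that the square relating $\partial_*$ for the pair $(X,F_y)$ to $\partial_*$ for the pair $(X_1,F_{y'})$ commutes by naturality of $\partial_*$ under the map of pairs $H_1$, and the square relating $f_*$ to $f_{1*}$ commutes because $f\circ H_1=h_1\circ f_1$; splicing these two squares with the definition $\partial=H_{1*}^{-1}\partial^{f_1}h_{1*}$ and the commutativity of Diagram 1 for $f_1$ gives $\partial=\partial_*\circ f_*^{-1}$, i.e.\ Diagram 1 commutes for $f$. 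Since $y\in\wt Y$ and $x\in F_y$ were arbitrary, $f$ is a \icq\ with respect to $\wt Y$. (One should also record the $\pi_0$-level bullet of Definition \ref{AQ}/\ref{kcequiv}: it transfers across the same homotopy equivalences, using that $\wt Y_1\subset\wt Y$ and that the $\pi_0$ statements are conjugation-invariant.)

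The main obstacle I anticipate is verifying carefully that $H_1$ really does induce an isomorphism on the relative groups $\pi_*(X,F_y)\cong\pi_*(X_1,F_{y'})$: one must combine the deformation retraction $X\simeq X_1$ (from $H$) with the fiberwise weak equivalence $F_y\simeq F_{y'}$ (second bullet) in a way that is compatible — i.e.\ check that the homotopy $H_t$ restricted to $F_y$ stays inside $f^{-1}(h_t(y))$ and that at the end one lands in $F_{y'}$, so that the pair map is genuinely a map of pairs and the five-lemma applies to the resulting ladder of long exact sequences. The basepoint bookkeeping (keeping track of $x\mapsto x'$ and path-components, since $\wt Y$ need not be all of $Y$) is the other place where care is needed, but it is routine given Remark \ref{kce}.
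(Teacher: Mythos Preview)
Your approach is correct and in essence the same as the paper's: both transport the weak-equivalence statement at $y'\in\wt Y_1$ back to $y\in\wt Y$ via the map of pairs $H_1:(X,F_y)\to(X_1,F_{y'})$ and the deformation $h_1:(Y,y)\to(Y_1,y')$. The paper is slightly more streamlined in two respects. First, rather than constructing $\partial$ by hand and then verifying Diagram~1, it invokes the characterization of Lemma~\ref{criterio}: $f$ is a \icq\ with respect to $\wt Y$ if and only if $f_*:\pi_q(X,F_y,x)\to\pi_q(Y,y)$ is a bijection for all $q$; so it suffices to show directly that $f:(X,F_y)\to(Y,y)$ is a weak equivalence, which is exactly the left vertical in the commutative square (the paper's Diagram~11) once the other three sides are known. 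Second, to prove that $H_1:(X,F_y)\to(X_1,F_{y'})$ is a weak equivalence of pairs, the paper factors through $(X,F_{y'})$ (its Diagram~10): the composite $i\circ H_1:(X,F_y)\to(X,F_{y'})$ is a weak equivalence by the Five Lemma (identity on $X$ up to homotopy, and the fiberwise hypothesis), while $i:(X_1,F_{y'})\to(X,F_{y'})$ is a weak equivalence since $H$ deformation-retracts $X$ onto $X_1$; hence $H_1$ is one too.

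Your ``main obstacle'' is therefore already handled by this Five-Lemma argument, and your worry that one must check $H_t(F_y)\subset f^{-1}(h_t(y))$ for all $t$ is unnecessary: the definition of deformation only imposes compatibility at $t=1$, and only the single map of pairs $H_1$ is needed for the Five Lemma on the long exact sequences.
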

    
Combining Theorems \ref{covering}, \ref{limit}  and \ref{deformationthm},
we can now deduce our final result. This is analogous to Theorem 2.6
in \cite{May88}.

\begin{maint}\label{maintheorem} Let $\{Y_i\}_{i\in {\Bbb N}}$ be a
  filtration of $Y$ by closed and $T_1$ subspaces.
  Let, $\wt Y=\cup_{i\in {\Bbb N}}\wt Y_i$ be an exhaustive subset 
  of $Y$ for the filtration
  $\{Y_i\}_{i\in {\Bbb N}}$. Assume that $\wt Y_i$ intersects each
  open subset $U$ of $Y_i$
  in an exhaustive subset $\wt U$ (e.g., if $\wt Y_i$ is dense in
  $Y_i$), and the following is satisfied.

  $\bullet$ $Y_1$ is \icd\ with respect to
  $\wt Y_1$ and for each $i\geq 1$, each open subset $U$ of $Y_{i+1}-Y_i$ is
  $c$\hyp{}distinguished with respect to the exhaustive
  subset $\wt U\subset \wt Y_{i+1}$. For
  each $i\in {\Bbb N}$, $Y_i$ has a neighborhood $U_{i+1}$ in $Y_{i+1}$ and a
  deformation $(H,h)$ from $f|_{f^{-1}(U_{i+1})}:f^{-1}(U_{i+1})\to U_{i+1}$ to
  $f|_{f^{-1}(Y_i)}:f^{-1}(Y_i)\to Y_i$, such that $h_1(\wt U_{i+1})\subset \wt Y_{i+1}$
  and $H_1:f^{-1}(y)\to f^{-1}(h_1(y))$
  is a weak equivalence for all $y\in \wt U_{i+1}$.

  Then, each $Y_i$ is \icd\ with respect to
  $\wt Y_i$ and $f$ is a  \icq\ with respect to
  $\wt Y$.\end{maint}

Here, we recall that the quasifibration versions of
Theorems \ref{covering}, \ref{limit} and \ref{deformationthm} were the
basic tools behind the proof of the
Dold-Thom Theorem (\cite{DT58}). In particular, using these theorems
one shows that for a based connected $CW$-pair $(X, A)$, the map
$SP(X)\to SP(X/A)$ is a quasifibration with fiber $SP(A)$. Here,
$SP(-)$ is the union of the $n$-fold symmetric products 
$SP_n(-)$. Recall that, the functor $SP_n(-)$ sends a space to the
quotient of its $n$-fold product by the obvious
action of the symmetric group $S_n$. Also
see [\cite{Hat03}, p. 484]. 

\section{Quasifibrations and \kcqs\ }
In this section we study properties of \kcqs\ and prove some
basic results needed for the proofs of the theorems.

\subsection{\kcqs\ and its functoriality}

In the following lemma we give a simple necessary and sufficient condition for an
almost\hyp{}quasifibration to be a quasifibration.

\begin{lemma}\label{NASC} $f:X\to Y$ is a quasifibration if and only if $f$ is an
  almost\hyp{}quasifibration with respect to $Y$, and Diagram 1 is commutative
  for all $y\in Y$, $x\in F_y$ and for $q=0, 1,2,\ldots$.\end{lemma}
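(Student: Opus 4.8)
The plan is to run, for each fixed $y\in Y$ and $x\in F_y$, a term\hyp{}by\hyp{}term comparison between the long exact homotopy sequence of the pair $(X,F_y)$ at $x$ and the sequence of Definition \ref{AQ} for $y$, the bridge between them being Diagram 1. I will freely use the standard identification $\pi_n(Y,\{y\},y)\cong\pi_n(Y,y)$, under which the map of pairs $(X,F_y,x)\to(Y,\{y\},y)$ induced by $f$ realizes $f_*\colon\pi_n(X,F_y,x)\to\pi_n(Y,y)$, and under which the composite $\pi_n(X,x)\to\pi_n(X,F_y,x)\xrightarrow{f_*}\pi_n(Y,y)$ agrees with $f_*\colon\pi_n(X,x)\to\pi_n(Y,y)$; in the lowest degrees I adopt the conventions of \cite{DT58}.

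For the ``only if'' direction, assume $f$ is a quasifibration, so $f_*\colon\pi_q(X,F_y,x)\to\pi_q(Y,y)$ is a bijection for all $q$, $y$, $x$. Put $\partial:=\partial_*\circ f_*^{-1}\colon\pi_{q+1}(Y,y)\to\pi_q(F_y,x)$, with $\partial_*$ the boundary map of the pair $(X,F_y)$. Substituting the isomorphisms $f_*$ into the long exact sequence of $(X,F_y)$ turns it, term by term, into the sequence of Definition \ref{AQ} extended to infinity, so $f$ is an almost\hyp{}quasifibration with respect to $Y$; and $\partial\circ f_*=\partial_*\circ f_*^{-1}\circ f_*=\partial_*$, which is exactly the commutativity of Diagram 1 for every $q$, $y$, $x$.

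For the ``if'' direction, fix $y$ and $x$ and form the ladder whose top row is the long exact homotopy sequence of $(X,F_y)$ at $x$ and whose bottom row is the sequence of Definition \ref{AQ} (exact, and extending to infinity on the left, by hypothesis), with vertical maps the identity on each $\pi_q(F_y,x)$ and each $\pi_q(X,x)$ and the map $f_*\colon\pi_{q+1}(X,F_y,x)\to\pi_{q+1}(Y,y)$ on the relative terms. The square joining $\partial_*$ and $\partial$ commutes by the assumed commutativity of Diagram 1; the square joining $\pi_{q+1}(X,x)\to\pi_{q+1}(X,F_y,x)$ to $f_*\colon\pi_{q+1}(X,x)\to\pi_{q+1}(Y,y)$ commutes because $f$ carries $F_y$ into $\{y\}$; and the squares containing $i_*$ commute on the nose. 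Since the four vertical maps flanking each relative term $\pi_{q+1}(X,F_y,x)$ are identities, the Five Lemma forces $f_*\colon\pi_{q+1}(X,F_y,x)\to\pi_{q+1}(Y,y)$ to be a bijection for every $q\ge 0$, while exactness of the bottom row already supplies what is needed at the $\pi_0$ terms. Hence $f\colon(X,F_y,x)\to(Y,y)$ is a weak equivalence for all $y$, $x$, i.e.\ $f$ is a quasifibration.

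The one genuinely delicate point, as always in this circle of ideas, is the low\hyp{}degree range: $\pi_1(X,F_y,x)$ and the various $\pi_0$'s are only pointed sets, and $\pi_2(X,F_y,x)$ may be nonabelian, so the Five Lemma must be used in its pointed\hyp{}set and group\hyp{}action form, with injectivity extracted from the action of $\pi_1(F_y,x)$. Because every flanking map here is an honest bijection this causes no real difficulty, but it should be written out in the style of \cite{DT58} rather than merely quoted. The only other thing to verify with care is the naturality asserted in the first paragraph — that $\partial_*$ and $\partial$ correspond under $f_*$ precisely when Diagram 1 commutes — and this is exactly the single extra hypothesis that distinguishes a quasifibration from an arbitrary almost\hyp{}quasifibration with respect to $Y$.
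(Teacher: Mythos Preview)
Your proposal is correct and follows essentially the same approach as the paper: in both directions the argument is to compare the long exact sequence of the pair $(X,F_y)$ with the almost\hyp{}quasifibration sequence via the ladder the paper calls Diagram~2, invoke the commutativity of Diagram~1 to make the ladder commute, and apply the Five Lemma; for the converse both you and the paper set $\partial=\partial_*\circ f_*^{-1}$. Your discussion of the low\hyp{}degree pointed\hyp{}set subtleties is more explicit than the paper's, but the underlying strategy is identical.
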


   \begin{proof} We observe that if $f$ is an almost\hyp{}quasifibration and
     Diagram 1 is commutative then, Diagram 2 is also commutative. 
     
     Hence using the Five Lemma, we see that 
     $f_*:\pi_{q+1}(X, F_y, x)\to \pi_{q+1}(Y,y)$ is a bijection.
     Here $j:(X,x)\to (X, F_y, x)$ is the
     inclusion map.
     
     \centerline{
       \xymatrix{\pi_{q+1}(F_y, x)\ar[r]^{i_*}\ar[d]^{=}&\pi_{q+1}(X,
         x)\ar[r]^{j_*}\ar[d]^{=}&
         \pi_{q+1}(X, F_y, x)\ar[r]^{\partial_*}\ar[d]^{f_*}&
         \pi_q(F_y, x)\ar[r]^{i_*}\ar[d]^{=}&\pi_q(X, x)\ar[d]^{=}\\
         \pi_{q+1}(F_y, x)\ar[r]^{i_*}&\pi_{q+1}(X, x)\ar[r]^{f_*}&\pi_{q+1}(Y, y)\ar[r]^{\partial}&
         \pi_q(F_y, x)\ar[r]^{i_*}&\pi_q(X, x)}}

     \centerline{Diagram 2}
     
     Conversely, assume that $f$ is a quasifibration, and hence
     $f_*:\pi_{q+1}(X, F_y, x)\to \pi_{q+1}(Y,y)$
     is a bijection. 
     Then, we can define $\partial$ as $\partial_*\circ f_*^{-1}$,
     which makes $f$ an almost\hyp{}quasifibration
     and Diagram 1 is commutative.

     This proves the lemma.
   \end{proof}

   \begin{cor} \label{relation} $f:X\to Y$ is a \icq\ with respect to
     $Y$ if and only if $f$ is a quasifibration.\end{cor}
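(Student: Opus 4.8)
The plan is to deduce this directly from Lemma \ref{NASC} by chasing the definitions. By Definition \ref{KQ}, $f$ being a \icq\ with respect to $Y$ means: $f$ is an \aq\ with respect to $Y$ (the exhaustive subset being all of $Y$), and Diagram 1 commutes for every $y\in Y$, every $x\in F_y$, and every $q=0,1,2,\ldots$. But that is literally the right-hand side of the biconditional in Lemma \ref{NASC}, which asserts this is equivalent to $f$ being a quasifibration. So the corollary is an immediate restatement. The only thing worth spelling out is the translation: the \aq\ condition of Definition \ref{AQ} requires, for each such $y$ and $x$, the existence of connecting homomorphisms $\partial$ fitting into a long exact sequence; Lemma \ref{NASC} incorporates exactly this together with commutativity of Diagram 1, so there is nothing more to verify.

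First I would note that the forward direction needs no work beyond unwinding Definition \ref{KQ} and quoting Lemma \ref{NASC}: a \icq\ with respect to $Y$ is by definition an \aq\ with respect to $Y$ with Diagram 1 commutative at all points and all $q$, hence a quasifibration. For the reverse direction, suppose $f$ is a quasifibration. Then, as in the converse part of the proof of Lemma \ref{NASC}, for each $y\in Y$ and $x\in F_y$ the map $f_*\colon\pi_{q+1}(X,F_y,x)\to\pi_{q+1}(Y,y)$ is a bijection, so one defines $\partial:=\partial_*\circ f_*^{-1}$; this makes the long exact sequence of Definition \ref{AQ} exact (it is the long exact sequence of the pair $(X,F_y)$ with $\pi_{q+1}(X,F_y,x)$ replaced by the isomorphic $\pi_{q+1}(Y,y)$) and makes Diagram 1 commute by construction. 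Hence $f$ is an \aq\ with respect to $Y$ with Diagram 1 commutative for all $y$, $x$, and $q$, i.e.\ a \icq\ with respect to $Y$.

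I do not anticipate a genuine obstacle here — the corollary is essentially a reformulation of Lemma \ref{NASC} in the terminology of Definition \ref{KQ}. The one point requiring a little care is making sure the bookkeeping around $\pi_0$ and the exactness at the low end of the sequence matches between Definition \ref{AQ}'s explicit sequence and the statement of Lemma \ref{NASC} (which writes the commutativity condition "for $q=0,1,2,\ldots$"); but since both are stated with respect to all of $Y$, the hypotheses align exactly and the proof is a two-line citation.
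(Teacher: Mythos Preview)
Your proposal is correct and matches the paper's approach: the corollary is stated without a separate proof in the paper, being an immediate consequence of Lemma \ref{NASC} together with Definition \ref{KQ}, exactly as you argue. There is nothing to add.
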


   Therefore, if $f$ is a \icq\ with respect to an exhaustive subset $\wt Y$, and 
   $\wt Y$ is \icd\ with respect to $\wt Y$, then $f|_{f^{-1}(\wt Y)}$ is a quasifibration.
   
   \begin{lemma} \label{connection} A $1$\hyp{}almost\hyp{}quasifibration with respect
     to some exhaustive subset $\wt Y$ is a
     $1$\hyp{}$c$\hyp{}quasifibration with respect to
     $\wt Y$, if $i_*:\pi_0(F_y, x)\to \pi_0(X,x)$ is an injection,
     equivalently if $f_*:\pi_1(X,x)\to\pi_1(Y,y)$ is a surjection, for
     all $y\in \wt Y$ and $x\in F_y$.\end{lemma}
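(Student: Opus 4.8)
The plan is to unwind the two definitions involved. Since $f$ is assumed to be a $1$\hyp{}almost\hyp{}quasifibration with respect to $\wt Y$, we are already handed, for every $y\in\wt Y$ and $x\in F_y$, the connecting map $\partial:\pi_1(Y,y)\to\pi_0(F_y,x)$ together with the exactness of the sequence in Definition \ref{AQ} (taken with $k=1$). By Definition \ref{KQ}, what remains is exactly to check that Diagram 1 commutes for $q=0$, for every such $y$ and $x$; that is, that $\partial\circ f_*=\partial_*$ as maps of pointed sets $\pi_1(X,F_y,x)\to\pi_0(F_y,x)$, where $\partial_*$ is the connecting map of the homotopy sequence of the pair $(X,F_y)$. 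I would first dispose of the parenthetical ``equivalently'': by exactness of the almost\hyp{}quasifibration sequence at $\pi_1(Y,y)$, the map $f_*:\pi_1(X,x)\to\pi_1(Y,y)$ is onto precisely when $\partial$ is constant at the base point, and, by exactness at $\pi_0(F_y,x)$, the image of $\partial$ is the preimage under $i_*$ of the base point of $\pi_0(F_y,x)$; since the hypothesis is imposed at \emph{every} base point $x\in F_y$, this is readily seen to be equivalent to injectivity of the maps $i_*$. So we may work with the hypothesis in the form ``$i_*:\pi_0(F_y,x)\to\pi_0(X,x)$ is injective for all $y\in\wt Y$ and $x\in F_y$''.

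The heart of the matter is the observation that, under this hypothesis, \emph{both} composites appearing in Diagram 1 (for $q=0$) are trivial. Fix $y\in\wt Y$ and $x\in F_y$. Exactness of the homotopy sequence of the pair $(X,F_y)$ at $\pi_0(F_y,x)$ says that the image of $\partial_*:\pi_1(X,F_y,x)\to\pi_0(F_y,x)$ is the preimage under $i_*$ of the base point; since $i_*$ is injective this preimage is the base point alone, so $\partial_*$ is constant at the base point. Exactly the same reasoning, applied instead to the almost\hyp{}quasifibration sequence, shows that $\partial:\pi_1(Y,y)\to\pi_0(F_y,x)$ is constant at the base point, hence so is $\partial\circ f_*$. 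Therefore $\partial\circ f_*=\partial_*$, i.e.\ Diagram 1 commutes for $q=0$. As $y\in\wt Y$ and $x\in F_y$ were arbitrary, Definition \ref{KQ} gives that $f$ is a $1$\hyp{}$c$\hyp{}quasifibration with respect to $\wt Y$, as desired.

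I do not expect a genuine obstacle here: once the definitions are spelled out, the argument is a one\hyp{}step diagram chase. The only points that call for care are of a bookkeeping nature --- the relevant objects $\pi_0(F_y,x)$, $\pi_0(X,x)$ and $\pi_1(X,F_y,x)$ are merely pointed sets, so ``exact'' must be read as ``image equals the preimage of the base point'', and the hypothesis must be used in its honest injective form, since that is precisely what forces the two connecting maps to vanish and so makes the two ways around Diagram 1 agree. In the same vein, checking the equivalence of the two stated forms of the hypothesis is the least mechanical step, the subtlety being that it uses the fact that the condition is imposed at all base points of the fibres.
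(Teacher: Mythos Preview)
Your argument is correct and follows essentially the same route as the paper: one observes that the only thing to check is commutativity of Diagram~1 for $q=0$, and this holds trivially because, under the injectivity hypothesis on $i_*$, both $\partial_*$ and $\partial$ are forced to be constant at the base point by exactness of their respective sequences. The paper's proof is the same one-line observation, stated via Diagram~3; your write-up is more explicit about the pointed-set bookkeeping and the equivalence of the two stated hypotheses, but the underlying idea is identical.
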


   \begin{proof} The proof is clear from Diagram 3.
     Since we only have to show that the triangle is commutative. But that  
follows, as $i_*$ is an injection if and only if $\partial$ is trivial
and also $i_*$ is an injection if and only if $\partial_*$ is trivial.\end{proof}

     \centerline{
       \xymatrix@C-=0.5cm{&&&&
         \pi_1(X,F_y,x)\ar[ld]_{f_*}\ar[d]^{\partial_*}&&\\
         1\ar[r]&\pi_1(F_y,x)\ar[r]^{i_*}&\pi_1(X,x)\ar[r]^{f_*}&\pi_1(Y,y)\ar[r]^{\partial}&
         \pi_0(F_y,x)\ar[r]^{i_*}&
         \pi_0(X,x)\ar[r]^{\ \ \ \ f_*}&\cdots.}}

     \centerline{Diagram 3}

     Therefore, we see that if $F_y$ is connected
     for all $y\in \wt Y$, then there is no difference between a
     $1$\hyp{}almost\hyp{}quasifibration and a 
     $1$\hyp{}c\hyp{}quasifibration with respect to $\wt Y$.

   More generally, we have the following criterion.
   
   \begin{lemma}\label{criterio} 
     $f:X\to Y$ is a $k$\hyp{}$c$\hyp{}quasifibration  
     with respect to an exhaustive subset $\wt Y$ if and 
     only if $j_*:\pi_{k+1}(X, x)\to \pi_{k+1}(X,F_y,x)$ is a surjection and
     the map $f_*:\pi_{q+1}(X, F_y, x)\to \pi_{q+1}(Y, y)$ is 
     a bijection, for
     all $y\in \wt Y$, $x\in F_y$ and for $q=0,1,2\ldots, k-1$.\end{lemma}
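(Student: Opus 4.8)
The plan is to run, pointwise over $\wt Y$, the argument already used for $k=\infty$ in Lemma~\ref{NASC}, but truncated at level $k$, adding one observation in degree $k+1$ to produce the leftmost $1\to$ of the defining sequence of a \kaq. Fix $y\in\wt Y$ and $x\in F_y$; since both the hypothesis and the conclusion are conditions at each such pair, it suffices to argue for a fixed one. Write $i_*,j_*,\partial_*$ for the maps in the (always exact) long exact sequence of the pair $(X,F_y)$ based at $x$, and recall that $j_*$ followed by the relative map $f_*$ on $\pi_*(X,F_y,x)$ equals the absolute map $f_*$ on $\pi_*(X,x)$.

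Forward implication: suppose $f$ is a \kcq\ with respect to $\wt Y$. Its exact sequence begins $1\to\pi_k(F_y,x)\xrightarrow{i_*}\pi_k(X,x)$, so $i_*$ is injective in degree $k$; by exactness of the pair sequence at $\pi_k(F_y,x)$ this makes $\partial_*$ trivial on $\pi_{k+1}(X,F_y,x)$, and then exactness of the pair sequence at $\pi_{k+1}(X,F_y,x)$ makes $j_*:\pi_{k+1}(X,x)\to\pi_{k+1}(X,F_y,x)$ surjective. For the bijections $f_*$ on relative groups, note that commutativity of Diagram~1 for $q=0,\dots,k-1$, together with the identity $f_*\circ j_*=f_*$, yields a morphism of exact sequences of exactly the shape of Diagram~2 — identity maps in the absolute and fiber columns, $f_*$ in the relative column — and the Five Lemma then gives that $f_*:\pi_{q+1}(X,F_y,x)\to\pi_{q+1}(Y,y)$ is a bijection for $q=0,\dots,k-1$, just as in Lemma~\ref{NASC}. (For $q=k-1$ the Five Lemma draws only on exactness of the \kaq\ sequence at $\pi_k(X,x)$, $\pi_k(Y,y)$ and $\pi_{k-1}(F_y,x)$, all of which are part of that sequence.)

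Converse: suppose $j_*$ is surjective in degree $k+1$ and $f_*:\pi_{q+1}(X,F_y,x)\to\pi_{q+1}(Y,y)$ is a bijection for $q=0,\dots,k-1$. Put $\partial:=\partial_*\circ f_*^{-1}$ in these degrees; then Diagram~1 commutes by construction, so it only remains to exhibit the \kaq\ exact sequence. For $1\le m\le k$ transport the pair sequence of $(X,F_y)$ along the isomorphisms $f_*:\pi_m(X,F_y,x)\to\pi_m(Y,y)$: the map $j_*$ leaving $\pi_m(X,x)$ becomes $f_*\circ j_*=f_*$ and the map $\partial_*$ leaving $\pi_m(X,F_y,x)$ becomes $\partial_*\circ f_*^{-1}=\partial$, so exactness of the pair sequence carries over to exactness of $\pi_k(F_y,x)\xrightarrow{i_*}\pi_k(X,x)\xrightarrow{f_*}\pi_k(Y,y)\xrightarrow{\partial}\pi_{k-1}(F_y,x)\to\cdots\to\pi_0(F_y,x)\xrightarrow{i_*}\pi_0(X,x)$; the remaining exactness at $\pi_0(X,x)$ and the trailing $\to1$ at $\pi_0(Y,y)$ use surjectivity of $f$ and are handled with the pointed\hyp{}set conventions of Definition~\ref{AQ}. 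Finally, surjectivity of $j_*$ in degree $k+1$ again forces $\partial_*=0$ on $\pi_{k+1}(X,F_y,x)$, hence $i_*$ injective in degree $k$, which supplies the leftmost $1\to\pi_k(F_y,x)$. Thus $f$ is a \kaq\ with respect to $\wt Y$ for which Diagram~1 commutes, i.e.\ a \kcq\ with respect to $\wt Y$.

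The point that needs the most care — and the one I would flag as the main obstacle — is the bookkeeping at the two ends of the range. In the forward direction one must check that all four flanking exactness statements feeding the Five Lemma are available precisely in degree $k$; this is exactly why the defining sequence of a \kaq\ is arranged to start at $\pi_k(F_y,x)$, and why the surjectivity of $j_*$ in degree $k+1$ is the natural extra hypothesis. In both directions one must handle the bottom of the sequence — the terms $\pi_1$ and $\pi_0$ of the pair, which are only pointed sets, and the exactness of the \kaq\ sequence at $\pi_0(X,x)$ and at $\pi_0(Y,y)$, which is not part of the pair sequence — in the pointed\hyp{}set sense and consistently with Definition~\ref{AQ}, exactly as was already done in the proof of Lemma~\ref{NASC}. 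None of this is deep, but a clean write\hyp{}up must be explicit about it.
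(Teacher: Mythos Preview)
Your proof is correct and follows essentially the same route as the paper's: both directions hinge on the equivalence (via exactness of the pair sequence) between surjectivity of $j_*$ in degree $k+1$ and injectivity of $i_*$ in degree $k$, together with the Five Lemma applied to Diagram~2 in the forward direction and the definition $\partial:=\partial_*\circ f_*^{-1}$ in the converse, exactly as in Lemma~\ref{NASC}. Your write\hyp{}up is simply more explicit than the paper's about the edge bookkeeping at levels $k$ and $0$.
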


   \begin{proof}  Consider the general Diagram 4.
     
     First note that $j_*:\pi_{k+1}(X, x)\to \pi_{k+1}(X,F_y,x)$ is a
     surjection if and only if $\partial_*:\pi_{k+1}(X, F_y, x)\to \pi_k(F_y, x)$ is
     the trivial homomorphism if and only if $i_*:\pi_k(F_y, x)\to \pi_k(X, x)$
     is an injection.
     
\centerline{
       \xymatrix@C-=0.75cm{&&&&\pi_{k+1}(X,
         F_y,x)\ar[ld]^{\ f_*}\ar[d]^{\partial_*}&\\
        \cdots\ar[r]&\pi_{k+1}(F_y, x)\ar[r]^{i_*}&\pi_{k+1}(X, x)\ar[r]^{\ \ \ f_*}\ar[rru]^{j_*}&
        \pi_k(Y, y)\ar[r]^{\partial}&
         \pi_k(F_y, x)\ar[r]^{\ \ \ \ \ \ \ i_*}&\cdots}}
\centerline{
       \xymatrix{&&\pi_1(X,F_y,x)\ar[ld]_{\!\!\!\! f_*}\ar[d]^{\partial_*}&&&\\
    \cdots\ar[r]^{\!\!\!\!\!\!\!\!\! f_*}&
         \pi_1(Y, y)\ar[r]^{\partial}&
         \pi_0(F_y, x)\ar[r]^{i_*}&
         \pi_0(X, x)\ar[r]^{\! \! \!  f_*}&\pi_0(Y, y)\ar[r]&1.}}

     \centerline{Diagram 4}
     
     If $f$ is a $k$\hyp{}$c$\hyp{}quasifibration then
     $i_*:\pi_k(F_y, x)\to \pi_k(X,x)$ is an injection by
     definition. Also since Diagram 1 is
     commutative for all $q=0,1,2,\ldots, k-1$, by
     the Five Lemma applied to Diagram 2, we get 
     $f_*:\pi_{q+1}(X, F_y, x)\to \pi_{q+1}(Y, y)$ is a bijection.

     Conversely, assume that $i_*:\pi_k(F_y, x)\to
     \pi_k(X,x)$ is an injection and
     $f_*:\pi_{q+1}((X, F_y, x))\to \pi_{q+1}((Y,
     y))$ is a bijection for $q=0,1,2\ldots, k-1$. Once again, as in
     Lemma \ref{NASC} we can define the homomorphisms $\partial$, so as
     to get the
     required exact sequence and Diagram 1 to be commutative.\end{proof}

   \begin{cor}\label{aspherical} Assume $\pi_2(X, x)=\l 1\r$ for all $x\in X$. Then $f$ is a
     $1$\hyp{}$c$\hyp{}quasifibration with respect to $\wt Y$,
     if and only if $\pi_2(X, F_y, x)=\l 1\r$ and
     $f_*:\pi_1(X, F_y, x)\to \pi_1(Y, y)$ is a bijection, for all $y\in \wt Y$ and 
     $x\in F_y$.\end{cor}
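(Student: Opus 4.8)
The plan is to derive this corollary as an immediate specialization of Lemma \ref{criterio} to the case $k=1$, using the aspherical hypothesis $\pi_2(X,x)=\langle 1\rangle$ to rewrite one surjectivity condition as a vanishing condition.

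First I would set $k=1$ in Lemma \ref{criterio}. For $k=1$ the index range $q=0,1,\ldots,k-1$ reduces to the single value $q=0$, so the lemma asserts that $f$ is a $1$\hyp{}$c$\hyp{}quasifibration with respect to $\wt Y$ if and only if, for all $y\in\wt Y$ and $x\in F_y$, the map $j_*:\pi_2(X,x)\to\pi_2(X,F_y,x)$ is a surjection and $f_*:\pi_1(X,F_y,x)\to\pi_1(Y,y)$ is a bijection. The bijectivity clause is already one of the two conditions appearing in the statement, so it transfers verbatim and requires no further argument.

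Next I would simplify the surjectivity clause using the hypothesis. Since $\pi_2(X,x)=\langle 1\rangle$, the source of $j_*$ is trivial, so its image is trivial; therefore $j_*$ is surjective precisely when its target $\pi_2(X,F_y,x)$ is itself trivial. Thus, under the standing assumption, the first condition of Lemma \ref{criterio} is equivalent to $\pi_2(X,F_y,x)=\langle 1\rangle$, which is exactly the remaining condition in the statement.

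Putting the two equivalences together yields the asserted biconditional. There is no real obstacle here: all of the homotopy-theoretic content is carried by Lemma \ref{criterio}, and the corollary follows directly once the aspherical hypothesis is fed into its surjectivity criterion.
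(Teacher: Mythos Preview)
Your proposal is correct and matches the paper's intent: the corollary is stated immediately after Lemma \ref{criterio} without proof, so it is meant to follow exactly as you describe, by specializing to $k=1$ and using $\pi_2(X,x)=\langle 1\rangle$ to convert the surjectivity of $j_*$ into the vanishing of $\pi_2(X,F_y,x)$.
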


   We now give some non-trivial examples of \kcqs.

   \begin{exm}\label{example}{\rm Let $S$ be a connected aspherical
       $2$-manifold. Let $G$ be a
       discrete group acting on $S$, effectively and properly discontinuously
       with isolated fixed points. If $S/G$ has genus zero assume that
       it has a puncture. Consider the space ${\cal O}_n(S,G)$ of
       $n$-tuples 
       of points of $S$ with pairwise distinct orbits. Then, in
       [\cite{Rou22}, Theorem 1.3 and Proposition 1.6] we had shown that
       the projection $p_l:{\cal O}_n(S,G)\to {\cal O}_l(S,G)$ to the
       first $l$ coordinates is a $1$-almost\hyp{}quasifibration, with
       respect to the subset $\wt {{\cal O}_l(S, G)}$ of
       points of ${\cal O}_l(S, G)$, whose 
       coordinates have trivial isotropy groups.
       And the projection is an almost\hyp{}quasifibration 
       with respect to $\wt {{\cal O}_l(S, G)}$, if either the action
       of $G$ on $S$ is free, or if $S/G$ has genus zero with at the
       most
      two cone points of order $2$ each. Using Lemma \ref{connection},
      now it follows that $p_l$
       is a $1$\hyp{}$c$\hyp{}quasifibration 
       or a $c$\hyp{}quasifibration with respect to $\wt {{\cal O}_l(S, G)}$, 
       for the respective cases as above.}\end{exm}

   The next examples are from $3$-manifold topology and of much importance, since
   they are the building blocks of $3$-manifolds, other than the hyperbolic
   ones. The standard references for the background of these examples are \cite{Hem76}
    and \cite{Sco83}.
   
\begin{exm}\label{example2}{\rm 
    Let $M$ be a connected Seifert fibered space. Let $B$ be the base orbifold 
    with infinite orbifold fundamental group. Let $q:M\to B$ be
    the quotient map. Then, it is well known that for any regular 
    point $b\in B$, there is the following exact
    sequence (\cite{Sco83}, Lemma 3.2). Here,
    ${\Bbb S}^1_b$ is the circle fiber over $b$ and $m\in {\Bbb S}^1_b$.

    \centerline{
      \xymatrix{1\ar[r]&{\Bbb Z}\simeq\pi_1({\Bbb S}^1_b, m)\ar[r]&\pi_1(M, m)\ar[r]&\pi_1^{orb}(B, b)\ar[r]&1.}}

    Note that $B$ is a good
    orbifold, since it has infinite orbifold fundamental group (\cite{Rou21}, Proposition 3.15).
    That is, there is a
  $2$-manifold $\hat B$ and a discrete group $G$ acting effectively and
  properly discontinuously on $\hat B$ with $B=\hat B/G$. Let $p:\hat B\to B$ be
  the orbifold covering map. Let $\hat p:\hat M\to \hat B$ be the pull back of
  $p$ by $q$ as shown in Diagram 5. Let $\hat G$ be the group of
  covering transformation of $\hat p$.

  \centerline{
    \xymatrix{\hat M\ar[r]^{\hat p}\ar[d]^{\hat q}&M\ar[d]^q\\
      \hat B\ar[r]^p&B}}

  \centerline{Diagram 5}
\noindent  
   For the rest of the deduction
  we follow the reference \cite{Moe02}. Let ${\cal G}(\hat M, \hat G)$ and
  ${\cal G}(\hat B, G)$ be the corresponding translation
  Lie groupoids.
  The above data induces a homomorphism
  $Q:{\cal G}(\hat M, \hat G)\to {\cal G}(\hat B, G)$, which in turn defines a
  continuous map $BQ:B{\cal G}(\hat M, \hat G)\to B{\cal G}(\hat B, G)$
  on their classifying spaces. From some generalities, we have the
  following two isomorphisms.
  
  $$\pi_1(B{\cal G}(\hat M, \hat G), \hat m)\simeq \pi_1(M,m)\ \text{and}\ \pi_1(B{\cal G}(\hat B, G), \hat b)\simeq
  \pi_1^{orb}(B,b).$$ Here, $b\in B$ is a regular point such that
  $p(\hat b)=b$, $\hat p(\hat m)=m$ and $q(m)=b$. Furthermore,
  $BQ$ induces the above exact sequence. Therefore, $BQ$ is a $1$\hyp{}$c$\hyp{}quasifibration
  with respect to the set $\wt {\hat B}$ of all points
  of $\hat B$ with trivial isotropy groups. Consequently, it is also a \icq\ 
with respect to $\wt {\hat B}$, since the above
  two classifying spaces are aspherical, which is a consequence of the fact that
  both $\hat M$ and $\hat B$ are aspherical.}
\end{exm}

The following examples, although easy, show that there are \kcqs\ for
   any $k$.
   
   \begin{exm}\label{example1}{\rm
       One can also 
       construct examples of \kcqs\ from Example \ref{example} by 
       taking product with a space $Z$, that is by considering  
$$p_l\times id_Z:{\cal O}_n(S,G)\times Z\to {\cal O}_l(S,G)\times
Z.$$ Then, $p_l\times id_Z$ becomes a $1$\hyp{}$c$\hyp{}quasifibration with
respect to $\wt {{\cal O}_l(S, G)}\times Z$. More generally, for a
\kcq\ $f$ with respect $\wt Y$, $f\times id_Z:X\times Z\to Y\times Z$
is a \kcq\ with respect to $\wt Y\times Z$. In addition, if $X$, $Y$
and $F_y$ are all aspherical for all $y\in \wt Y$ and if $f$ is a
$1$\hyp{}$c$\hyp{}quasifibration, then $f\times id_Z$ is a \kcq\ for
all $k$. Also, for any quasifibration $f$, if we change the sign
of $\partial$ at some stage, say $q$ and for all $y\in Y$,
then the long sequence of homotopy
groups still remains exact, and
hence $f$ has an almost-quasifibration structure which is not a 
\icq, since Diagram 1 at stage $q$ is not commutative.}\end{exm}

       We also asked in \cite{Rou22} if
       for any connected manifold $M$ with an effective and properly
       discontinuous action of a discrete group $G$, $p_l:{\cal O}_n(M,G)\to
       {\cal O}_l(M,G)$
       is an almost\hyp{}quasifibration (\icq)\ with respect
       to $\wt {{\cal O}_l(M, G)}$?

       Next, we observe that the exact sequence in the 
definition of \kcq\ is functorial, in the sense described in the following
lemma. The lemma is easy to verify.

\begin{lemma}\label{functorial}
Let $f:X\to Y$ and $\hat f:\hat X\to \hat Y$ be two \kcqs\ with respect
     to $\wt Y$ and $\wt {\hat Y}$, respectively. Assume that there
     are continuous maps $g:X\to \hat X$ and $h:Y\to \hat Y$ making 
     Diagram 6 commutative and that $h(\wt Y)\subset \wt {\hat Y}$.

     \centerline{
       \xymatrix{X\ar[r]^f\ar[d]^g&Y\ar[d]^h\\
         \hat X\ar[r]^{\hat f}&\hat Y}}

     \centerline{\rm{Diagram 6}}

     Then, for all $y\in \wt Y$ and $x\in F_y$, 
     Diagram 7 is commutative. Here, $\hat x=g(x)$, $\hat
     y=h(y)$ and $\hat F_{\hat y}={\hat f}^{-1}(\hat y)$.
   
     \centerline{
      \xymatrix@C-=0.5cm{1\ar[r]&\pi_k(F_y,x)\ar[r]^{i_*}\ar[d]^{g_*}&
        \pi_k(X,x)\ar[r]^{f_*}\ar[d]^{g_*}&
        \pi_k(Y,y)\ar[r]^{\!\!\!\!\!\! \partial}\ar[d]^{h_*}&
        \pi_{k-1}(F_y,x)\ar[r]\ar[d]^{g_*}&
        \cdots\ar[r]^{\!\!\!\!\!\! \hat f_*}&\pi_0(Y,y)\ar[r]\ar[d]^{h_*}&1\\
        1\ar[r]&\pi_k(\hat F_{\hat y},\hat x)\ar[r]^{i_*}&
        \pi_k(\hat X,\hat x)\ar[r]^{\hat f_*}&
        \pi_k(\hat Y,\hat y)\ar[r]^{\!\!\!\!\!\! \partial}&
        \pi_{k-1}(\hat F_{\hat y},\hat
        x)\ar[r]&\cdots\ar[r]^{\!\!\!\!\!\! \hat f_*}&\pi_0(\hat Y,\hat y)\ar[r]&1}}

    \centerline{\rm{Diagram 7}}\end{lemma}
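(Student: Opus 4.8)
The plan is to check the commutativity of Diagram 7 one square at a time: every square whose horizontal arrows are $i_*$ or $f_*$ (resp.\ $\hat f_*$) will commute by bare functoriality of homotopy groups, and the squares involving the connecting maps $\partial$ will reduce, via the defining identity $\partial=\partial_*\circ f_*^{-1}$ supplied by the \kcq\ hypothesis, to the naturality of the boundary map in the long exact sequence of a pair.

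First I would set up the bookkeeping. Since $\hat f\circ g=h\circ f$ by Diagram 6, for every $x'\in F_y$ we have $\hat f(g(x'))=h(f(x'))=h(y)=\hat y$; hence $g$ carries $F_y$ into $\hat F_{\hat y}$, and in particular $\hat x=g(x)\in\hat F_{\hat y}$. Also $h(\wt Y)\subset\wt{\hat Y}$ forces $\hat y=h(y)\in\wt{\hat Y}$, so Definition \ref{KQ} and Lemma \ref{criterio} apply at $\hat y$ with fibre $\hat F_{\hat y}$ and basepoint $\hat x$. Thus $g$ is a map of pairs $(X,F_y)\to(\hat X,\hat F_{\hat y})$, $h$ is a map of pairs $(Y,\{y\})\to(\hat Y,\{\hat y\})$, and $\hat f\circ g=h\circ f$ as maps $(X,F_y)\to(\hat Y,\{\hat y\})$.

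Granting this, the squares of Diagram 7 whose horizontal arrows are $i_*$ commute because $i$ is an inclusion and $g$ restricts to $F_y\to\hat F_{\hat y}$; the squares whose horizontal arrows are $f_*$ or $\hat f_*$ commute because $h_*\circ f_*=\hat f_*\circ g_*$ holds on all homotopy groups of $X$ (again from $\hat f\circ g=h\circ f$); and the outermost squares involving $1$ are trivial. So everything reduces to checking, for $q=0,1,\dots,k-1$, that $g_*\circ\partial=\hat\partial\circ h_*$ as maps $\pi_{q+1}(Y,y)\to\pi_q(\hat F_{\hat y},\hat x)$. By Definition \ref{KQ} (commutativity of Diagram 1) together with Lemma \ref{criterio}, $f_*\colon\pi_{q+1}(X,F_y,x)\to\pi_{q+1}(Y,y)$ and $\hat f_*\colon\pi_{q+1}(\hat X,\hat F_{\hat y},\hat x)\to\pi_{q+1}(\hat Y,\hat y)$ are bijections with $\partial=\partial_*\circ f_*^{-1}$ and $\hat\partial=\hat\partial_*\circ\hat f_*^{-1}$, where $\partial_*$ and $\hat\partial_*$ are the connecting maps of the pairs $(X,F_y)$ and $(\hat X,\hat F_{\hat y})$. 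For $\xi\in\pi_{q+1}(Y,y)$ write $\xi=f_*(\alpha)$; then
\[
g_*\,\partial(\xi)=g_*\,\partial_*\,f_*^{-1}f_*(\alpha)=g_*\,\partial_*(\alpha)=\hat\partial_*\,g_*(\alpha)
\]
by naturality of the connecting map of a pair for the map of triples $g\colon(X,F_y,x)\to(\hat X,\hat F_{\hat y},\hat x)$, while
\[
\hat\partial\,h_*(\xi)=\hat\partial_*\,\hat f_*^{-1}h_*f_*(\alpha)=\hat\partial_*\,\hat f_*^{-1}\hat f_*\,g_*(\alpha)=\hat\partial_*\,g_*(\alpha)
\]
using $h_*\circ f_*=\hat f_*\circ g_*$ and the bijectivity of $\hat f_*$. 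Since the two composites agree, Diagram 7 commutes.

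I do not anticipate a genuine obstacle: the entire content is the naturality of the long exact sequence of a pair together with the identification $\partial=\partial_*\circ f_*^{-1}$ handed to us by the \kcq\ hypothesis. The only point deserving a word of care is the low-dimensional end, where $\pi_1(X,F_y,x)$, the $\pi_0$ terms, and the arrows between them are to be read as pointed sets and maps of pointed sets exactly as in the exact sequences of Definition \ref{AQ}; the diagram chase above is insensitive to this, since it only uses $f_*$ and $\hat f_*$ as bijections and invokes naturality, both of which remain valid verbatim in that setting.
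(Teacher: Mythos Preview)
Your argument is correct and is precisely the verification the paper has in mind: the paper does not spell out a proof at all, remarking only that ``the lemma is easy to verify,'' and your use of the identification $\partial=\partial_*\circ f_*^{-1}$ from Definition~\ref{KQ}/Lemma~\ref{criterio} together with naturality of the long exact sequence of a pair is exactly the intended route. There is nothing to add.
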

       
   \subsection{Some basic results}
   
   We will need the following local to global type lemma for the proofs of the theorems.

   \begin{lemma}\label{directlimit} Let ${\cal O}=\{Y_{\alpha}\}_{\alpha\in J}$ be a
     covering of $Y$ by subsets. Consider the partial ordering on $J$ induced by
     set inclusions of the members of
     $\cal O$, and let this order (denoted by $\peq$) be a total ordering. Assume that
     any compact subset of $Y$ is contained in some $Y_{\alpha}\in {\cal O}$. Let
     $\wt Y_{\alpha}$ be an exhaustive subset
     of $Y_{\alpha}\in {\cal O}$, with the property that
     if $\alpha\peq {\beta}$ in $J$, then $\wt Y_{\alpha}\subset \wt Y_{\beta}$. If
      each $Y_{\alpha}\in \cal O$ is  \kcd\ with respect to
      $\wt Y_{\alpha}$, then $f$ is a \kcq\ with respect
      to $\cup_{\alpha\in J}\wt Y_{\alpha}$.\end{lemma}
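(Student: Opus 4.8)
The plan is to reduce the statement to a verification of the criterion in Lemma \ref{criterio} at each point $y\in\wt Y:=\cup_{\alpha\in J}\wt Y_\alpha$, using that homotopy groups (and relative homotopy groups) commute with the kind of colimit we are implicitly dealing with here. Fix $y\in\wt Y$ and $x\in F_y$. Since $\wt Y$ is the union of the $\wt Y_\alpha$, there is some $\alpha_0\in J$ with $y\in\wt Y_{\alpha_0}$; by the hypothesis that $\peq$ is a total order and that the $\wt Y_\alpha$ are nested along $\peq$, every $Y_\alpha$ with $\alpha_0\peq\alpha$ contains $y$ in $\wt Y_\alpha$, and these form a cofinal totally ordered subfamily whose union is $Y$ (because any compact subset of $Y$, in particular any image of a sphere or disk, lies in some single $Y_\alpha$, which we may take $\peq$-above $\alpha_0$). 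So I would first record that $Y=\mathrm{colim}\,Y_\alpha$ and $X=f^{-1}(Y)=\mathrm{colim}\,f^{-1}(Y_\alpha)$ as the relevant colimits, and that $F_y=f^{-1}(y)$ is unchanged throughout the cofinal family.

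Next I would exploit compactness to pass homotopy groups through the colimit. For each $\alpha$ with $\alpha_0\peq\alpha$, write $X_\alpha=f^{-1}(Y_\alpha)$; since $Y_\alpha$ is $c$-distinguished with respect to $\wt Y_\alpha$ and $y\in\wt Y_\alpha$, Lemma \ref{criterio} (applied to $f|_{X_\alpha}:X_\alpha\to Y_\alpha$) gives that $j_*:\pi_{k+1}(X_\alpha,x)\to\pi_{k+1}(X_\alpha,F_y,x)$ is surjective and that $f_*:\pi_{q+1}(X_\alpha,F_y,x)\to\pi_{q+1}(Y_\alpha,y)$ is a bijection for $q=0,\dots,k-1$. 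Now any element of $\pi_m(Y,y)$ or $\pi_m(X,x)$ or $\pi_m(X,F_y,x)$ is represented by a map from a compact space, hence factors through some $Y_\alpha$ (respectively $X_\alpha$) in the cofinal family, and likewise any null-homotopy or homotopy between two such representatives has compact source and factors through some larger $\alpha'$ in the family; totality of $\peq$ lets us compare any two indices. This shows $\pi_m(Y,y)=\mathrm{colim}_\alpha\,\pi_m(Y_\alpha,y)$, $\pi_m(X,x)=\mathrm{colim}_\alpha\,\pi_m(X_\alpha,x)$, and $\pi_m(X,F_y,x)=\mathrm{colim}_\alpha\,\pi_m(X_\alpha,F_y,x)$, compatibly with $i_*$, $f_*$, $j_*$, $\partial_*$.

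Finally I would take the colimit of the conclusions of Lemma \ref{criterio} over the cofinal family: a filtered colimit of surjections is a surjection, and a filtered colimit of bijections is a bijection, so $j_*:\pi_{k+1}(X,x)\to\pi_{k+1}(X,F_y,x)$ is surjective and $f_*:\pi_{q+1}(X,F_y,x)\to\pi_{q+1}(Y,y)$ is a bijection for $q=0,\dots,k-1$. Since this holds for every $y\in\wt Y$ and every $x\in F_y$, Lemma \ref{criterio} (in the converse direction) yields that $f$ is a \kcq\ with respect to $\wt Y$, which is the assertion. The main obstacle I anticipate is the colimit/compactness argument: one must be careful that the cofinal family is genuinely directed (here the total ordering hypothesis is exactly what is needed, so that the colimit is over a directed set and filtered-colimit exactness of $\mathrm{Hom}$-out-of-compact applies), and that $Y$ really carries the colimit topology along this cofinal subfamily — but since $Y=\cup_\alpha Y_\alpha$ with the $Y_\alpha$ nested along a total order and every compact set inside one of them, the colimit topology and compactness-factorization both go through. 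Everything else is a formal diagram chase already packaged in Lemmas \ref{criterio} and \ref{functorial}.
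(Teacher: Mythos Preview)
Your proposal is correct and follows essentially the same route as the paper: both arguments fix $y\in\wt Y$, pass to the cofinal totally ordered family $\{Y_\alpha:\alpha_0\peq\alpha\}$, and use the hypothesis that compact sets land in a single $Y_\alpha$ to identify the relevant homotopy groups with filtered colimits, then push the \kcq\ property through the colimit. The only packaging difference is that the paper verifies the definition directly (constructing $\partial$ via a colimit of Diagram~1 and checking exactness via Lemma~\ref{functorial}), whereas you invoke the equivalent criterion of Lemma~\ref{criterio} and pass bijectivity/surjectivity of $f_*$ and $j_*$ through the filtered colimit; this is arguably a little cleaner but amounts to the same thing. One small remark: your aside about $Y$ carrying the colimit topology is unnecessary---the lemma does not assume this, and your argument does not use it, since the compactness hypothesis alone gives the factorization of maps and homotopies you need.
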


   \begin{proof} Let $y\in \cup_{\alpha\in J}\wt Y_{\alpha}$ and choose $\beta\in I$
     so that $y\in \wt Y_{\beta}$. Let $X_{\alpha}=f^{-1}(Y_{\alpha})$
     for all $\alpha\in J$.
     For $\beta\peq \gamma\peq \delta$ in $J$, consider Diagram 8. 

     \centerline{
       \xymatrix{&\ar[ld]_{f_*}\pi_{q+1}(X_{\gamma}, F_y, x)\ar[d]^{\partial_*}\ar[rd]&\\
 \pi_{q+1}(Y_{\gamma},y)\ar[r]^{\partial}\ar[rd]&\pi_q(F_y, x)\ar[rd]&
 \ar[ld]_{\ \ \ \ \ \ \ f_*}\pi_{q+1}(X_{\delta}, F_y, x)\ar[d]^{\partial_*}\\
         &\pi_{q+1}(Y_{\delta},y)\ar[r]^{\partial}&\pi_q(F_y, x)}}

     \centerline{Diagram 8}

     The two vertical triangles are commutative for $q=0,1,2,\ldots, k-1$,
     since each $f|_{X_{\alpha}}$ is a \kcq\ with respect to $\wt
     Y_{\alpha}$, and $\wt Y_{\beta}\subset \wt Y_{\gamma}\subset \wt Y_{\delta}$.
     And the whole diagram is commutative by functoriality of boundary maps.
     Now, note that, since any compact subset of $Y$ is contained in some member
     of $\cal O$, it is contained in $Y_{\alpha}$ for some
     $\alpha \in J$ with $\beta\peq \alpha$, as $(J, \peq)$ is totally
     ordered. Therefore,  
      a colimit argument on Diagram 8 gives the commutative Diagram 1, and hence we obtain
     a definition
     of $\partial$ for $f:X\to Y$.
     
   \centerline{
      \xymatrix@C-=0.7cm{1\ar[r]&\pi_k(F_y,x)\ar[r]^{\!\!\!\!
        i_*}\ar[d]&
      \pi_k(X_{\gamma},x)\ar[r]^{\!\!\! f_*}\ar[d]&
        \pi_k(Y_{\gamma},y)\ar[r]^{\!\!\!\! \partial}\ar[d]&\pi_{k-1}(F_y,x)\ar[r]\ar[d]&
        \cdots\\
     1\ar[r]&\pi_k(F_y,x)\ar[r]^{\!\!\!\!
          i_*}&
        \pi_k(X_{\delta},x)\ar[r]^{\!\!\! f_*}&
        \pi_k(Y_{\delta},y)\ar[r]^{\!\!\!\! \partial}&\pi_{k-1}(F_y,x)\ar[r]&
        \cdots}}
    
 \centerline{Diagram 9}

Next, consider Diagram 9. This diagram is commutative by functoriality
of \kcq\ (see Lemma \ref{functorial}). Hence, again 
     by taking colimit we see that, since $f|_{X_{\alpha}}$ is a \kcq\
     with respect to $\wt Y_{\alpha}$, for each $\alpha$,
     $f$ is a \kcq\ with respect
     to $\cup_{\alpha\in J}\wt Y_{\alpha}$.
  \end{proof}
   
     The following proposition is
   the main ingredient for the proof of Theorem \ref{covering}. This
   result is analogous to [\cite{DT58}, Satz 2.2].

   Let $Y_1, Y_2\subset Y$ be such that $Y$ is the union of the interiors of
   $Y_1$ and $Y_2$, then it is standard to call $(Y; Y_1, Y_2)$ an {\it excisive triad}. 
 
 \begin{prop}\label{coveringprop} Let $(Y; Y_1, Y_2)$ be an excisive triad. 
Let $\wt Y_{12}\subset Y_1\cap Y_2$, $\wt Y_1\subset Y_1$ and
$\wt Y_2\subset Y_2$ be exhaustive subsets such that $\wt Y_{12}\subset \wt Y_1\cap \wt Y_2$.
If $Y_1$, $Y_2$ and $Y_1\cap Y_2$ are all
   $c$\hyp{}{\it distinguished} with respect to the
   corresponding exhaustive subsets above, then $f$ is a 
   $c$\hyp{}quasifibration with respect to $\wt Y_1\cup \wt Y_2$.\end{prop}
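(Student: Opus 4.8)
The plan is to verify the criterion of Lemma \ref{criterio} for $f$ with respect to $\wt Y_1\cup\wt Y_2$, i.e.\ for every $y$ in $\wt Y_1\cup\wt Y_2$ and $x\in F_y$ to show that $j_*:\pi_{q+2}(X,x)\to\pi_{q+2}(X,F_y,x)$ is onto and $f_*:\pi_{q+1}(X,F_y,x)\to\pi_{q+1}(Y,y)$ is a bijection for all $q\ge 0$. Since $Y$ is covered by the interiors of $Y_1$ and $Y_2$, any point $y$ lies in (at least) one of the open sets $\mr Y_1$, $\mr Y_2$; by symmetry assume $y\in\mr Y_1$, and then $y\in\wt Y_1$ after possibly replacing it by a path-component representative — more precisely we only need $y\in\wt Y_1\cup\wt Y_2$, so one of the two hypotheses ``$Y_1$ is \icd\ with respect to $\wt Y_1$'' or ``$Y_2$ is \icd\ with respect to $\wt Y_2$'' applies to $y$, giving the desired statement for the pair $(f^{-1}(\mr Y_i),\mr Y_i)$. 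The remaining work is to pass from the open set $\mr Y_i$ back to $Y$, and this is where excision enters.

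The key step is an excision/Mayer–Vietoris argument for relative homotopy groups. Writing $X_1=f^{-1}(Y_1)$, $X_2=f^{-1}(Y_2)$, $X_{12}=f^{-1}(Y_1\cap Y_2)$, the triad $(X;X_1,X_2)$ is excisive since $f$ is continuous, and $F_y\subset X_1$ (say) for our chosen $y$. I would use the standard fact that the inclusion-induced maps on relative homotopy groups, combined with the van Kampen-type/Blakers–Massey-free statement that for an excisive triad the ``triad is trivial in the relevant range'' does \emph{not} hold in general — instead the correct tool here is Dold–Thom's Satz 2.2 reasoning: one shows directly that $f_*:\pi_{q+1}(X,F_y,x)\to\pi_{q+1}(Y,y)$ is a bijection by comparing it, via a commutative ladder, with the corresponding maps for $f|_{X_1}$ over $Y_1$, for $f|_{X_2}$ over $Y_2$, and for $f|_{X_{12}}$ over $Y_1\cap Y_2$. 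Concretely, a class in $\pi_{q+1}(Y,y)$ is represented by a map of a disk whose image, being compact, is covered by finitely many translates under subdivision lying alternately in $\mr Y_1$ and $\mr Y_2$; using that each of the three restricted maps is a \icq\ (hence $f_*$ there is bijective by Lemma \ref{criterio}), one lifts the subdivided pieces to $X$ rel $F_y$ and glues, getting surjectivity of $f_*$; injectivity is the same argument applied one dimension up to a nullhomotopy. The compatibility of the connecting maps $\partial$ across the three pieces (which follows from functoriality, Lemma \ref{functorial}, together with $\wt Y_{12}\subset\wt Y_1\cap\wt Y_2$) guarantees that the $\partial$ we build for $f$ is well-defined independent of which of $Y_1,Y_2$ we used, and makes Diagram 1 commute, so $f$ is actually a \icq\ and not merely an \aq.

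The main obstacle I anticipate is exactly the gluing/subdivision bookkeeping: relative homotopy groups are only groups for $q+1\ge 2$ and $\pi_1(X,F_y,x)$, $\pi_0$ are merely pointed sets, so the bijection statements at the bottom of the exact sequence must be handled by hand (this is also why the statement is phrased via Lemma \ref{criterio} in terms of $j_*$ surjectivity plus $f_*$ bijectivity rather than via a naive Five Lemma). A secondary subtlety is that $y$ might lie in $\mr Y_1\cap\mr Y_2$, in which case I must check that the $\partial$ obtained from the $Y_1$-structure agrees with the one from the $Y_2$-structure; this is where the hypothesis that $Y_1\cap Y_2$ is \icd\ with respect to $\wt Y_{12}$, and $\wt Y_{12}\subset\wt Y_1\cap\wt Y_2$, is essential — it lets me route both identifications through the single structure on $f|_{X_{12}}$ and invoke Lemma \ref{functorial}. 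Once these two points are dispatched, the verification of the exact sequence and of the commutativity of Diagram 1 for $f$ over $\wt Y_1\cup\wt Y_2$ is a routine diagram chase.
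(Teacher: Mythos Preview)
Your approach attempts a direct Dold--Thom style subdivision-and-lifting argument, whereas the paper proceeds by packaging the excision step into two auxiliary lemmas. Lemma \ref{funda} converts ``$f:(X,F_y)\to(Y,y)$ is a weak equivalence for all $y\in\wt Y_1$'' into the pair statement ``$f:(X,f^{-1}(Y_1))\to(Y,Y_1)$ is a \cwe\ with respect to $f^{-1}(\wt Y_1)$'' (and back), via the long exact sequence of the triple $(X,f^{-1}(Y_1),F_y)$. Lemma \ref{n-equiv}, which is just Hatcher's Proposition 4K.1 specialized to exhaustive subsets, then glues the two \cwe s $f:(X_i,X_{12})\to(Y_i,Y_{12})$ into the global one $f:(X,X_i)\to(Y,Y_i)$. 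Your subdivision sketch is in effect a re-derivation of 4K.1, but as written it has a gap: the interior cells of the subdivision do \emph{not} have their boundaries mapping into $F_y$, so ``lift the subdivided pieces to $X$ rel $F_y$'' is not well-posed, and the bijectivity of $f_*$ you invoke from Lemma \ref{criterio} is only available for pairs $(X_i,F_{y'})$ with $y'\in\wt Y_i$, not for the faces of arbitrary subcells. The inductive lifting over skeleta really needs the passage from point-fibers $F_y$ to subspace-preimages $f^{-1}(Y_{12})$ that Lemma \ref{funda} isolates; once you make that passage, you are essentially carrying out the paper's argument.

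Your second concern, about compatibility of $\partial$ when $y$ lies in both $\wt Y_1$ and $\wt Y_2$, is unnecessary. Once $f_*:\pi_{q+1}(X,F_y,x)\to\pi_{q+1}(Y,y)$ is shown to be bijective for all $q\geq 0$, the connecting map is canonically $\partial=\partial_*\circ f_*^{-1}$ as in the proof of Lemma \ref{NASC}, so there is nothing to reconcile. The hypothesis that $Y_1\cap Y_2$ is \icd\ with respect to $\wt Y_{12}$ is used not for that purpose but to feed Lemma \ref{funda} applied to $f|_{X_i}:X_i\to Y_i$, i.e.\ to get the input \cwe s for the excision Lemma \ref{n-equiv}. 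Also, the aside ``assume $y\in\mr Y_1$'' is both unjustified ($\wt Y_1\subset Y_1$, not $\mr Y_1$) and unneeded: the interiors enter only through the excisive-triad hypothesis used in Lemma \ref{n-equiv}, not through the location of $y$.
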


 To prove the proposition we need the following two lemmas.

 \begin{lemma}\label{funda} Let $Y_1\subset Y$ be $c$\hyp{}distinguished
   with respect to an exhaustive subset $\wt Y_1\subset Y_1$. Then $f:(X, F_y, x)\to (Y, y)$
   is a weak equivalence for all  $y\in \wt Y_1$ if and only if
   $f:(X, f^{-1}(Y_1), x)\to (Y, Y_1, y)$ is a $c$\hyp{}weak equivalence
   with respect to $f^{-1}(\wt Y_1)$.\end{lemma}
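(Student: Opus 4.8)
The plan is to compare, through the map $f$, the exact homotopy sequences of the two triples $(X,X_1,F_y)$ and $(Y,Y_1,\{y\})$, where $X_1:=f^{-1}(Y_1)$, $y\in\wt Y_1$ and $x\in F_y$ (note $F_y\subseteq X_1$, as $y\in Y_1$). Naturality of the homotopy sequence of a triple yields the commutative ladder
\[
\xymatrix@C-=0.45cm{
\pi_{n}(X_1,F_y,x)\ar[r]\ar[d]^{f_*} & \pi_{n}(X,F_y,x)\ar[r]\ar[d]^{f_*} & \pi_{n}(X,X_1,x)\ar[r]\ar[d]^{f_*} & \pi_{n-1}(X_1,F_y,x)\ar[r]\ar[d]^{f_*} & \pi_{n-1}(X,F_y,x)\ar[d]^{f_*}\\
\pi_{n}(Y_1,y)\ar[r] & \pi_{n}(Y,y)\ar[r] & \pi_{n}(Y,Y_1,y)\ar[r] & \pi_{n-1}(Y_1,y)\ar[r] & \pi_{n-1}(Y,y)
}
\]
for all $n\geq 1$ (using $\pi_m(Y_1,\{y\},y)=\pi_m(Y_1,y)$ and $\pi_m(Y,\{y\},y)=\pi_m(Y,y)$). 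Since $Y_1$ is \icd\ with respect to $\wt Y_1$, the map $f|_{X_1}:X_1\to Y_1$ is a \icq\ with respect to $\wt Y_1$, so Lemma \ref{criterio} (applied for every $k$) tells us the outermost vertical maps $f_*:\pi_m(X_1,F_y,x)\to\pi_m(Y_1,y)$ are bijections for all $m\geq 1$, all $y\in\wt Y_1$ and $x\in F_y$. By the Five Lemma, then, the columns $f_*:\pi_m(X,F_y,x)\to\pi_m(Y,y)$ are all bijective (for every $y\in\wt Y_1$, $x\in F_y$) if and only if the columns $f_*:\pi_m(X,X_1,x)\to\pi_m(Y,Y_1,y)$ are all bijective, $m\geq1$. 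The latter is precisely the second bullet of Definition \ref{kcequiv} for $f:(X,X_1)\to(Y,Y_1)$ with respect to $\wt X_1=f^{-1}(\wt Y_1)$, because every point of $f^{-1}(\wt Y_1)$ is an $x\in F_y$ with $y\in\wt Y_1$. This settles the lemma in positive degrees.

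It remains to match the $\pi_0$-level statements: exactness at $\pi_0(X,x)$ of the fibre sequence of $f$ over each $y\in\wt Y_1$ (equivalently: a path component of $X$ meets $F_y$ iff its image lies in the path component of $y$) on one side, and the first bullet of Definition \ref{kcequiv}, the condition $f_*^{-1}\,\mathrm{Image}(\pi_0(Y_1)\to\pi_0(Y))=\mathrm{Image}(\pi_0(X_1)\to\pi_0(X))$, on the other. The ``$\subseteq$'' half of each is immediate from $X_1=f^{-1}(Y_1)$ and $F_y\subseteq X_1$. For the reverse implications I would first restrict $f$ over a single path component $D$ of $Y$: by the observation following Definition \ref{AQ}, which applies equally to \icqs, the restriction still has $Y_1\cap D$ \icd\ with respect to $\wt Y_1\cap D$, and both statements are verified one $D$ at a time, so one may assume $Y$ path connected. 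The key input is then the $\pi_0$-exactness of the fibre sequence of $f|_{f^{-1}(C_\alpha)}$ over each path component $C_\alpha$ of $Y_1$: as $C_\alpha$ is connected, for every $z\in\wt Y_1\cap C_\alpha$ every path component of $f^{-1}(C_\alpha)$ meets $F_z$. Combining this with the exhaustiveness of $\wt Y_1$ (to supply such a $z$ in the relevant $C_\alpha$), and, for the implication ``first bullet $\Rightarrow$ fibre exactness'', with the isomorphism $\pi_1(X,X_1,x)\cong\pi_1(Y,Y_1,y)$ already obtained (which forces the path components of $X_1$ lying in a given component of $X$ to surject onto $\pi_0(Y_1)$), one can transport an intersection with a remote fibre $F_z$ into an intersection with $F_y$, and conclude.

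Assembling: for the ``if'' direction, Definition \ref{kcequiv} supplies the first bullet together with the positive-degree isomorphisms $\pi_m(X,X_1,x)\cong\pi_m(Y,Y_1,y)$; the ladder converts these into $\pi_m(X,F_y,x)\cong\pi_m(Y,y)$ for $m\geq1$, and the $\pi_0$ discussion gives exactness at $\pi_0(X,x)$, so $f:(X,F_y,x)\to(Y,y)$ is a weak equivalence for every $y\in\wt Y_1$. The ``only if'' direction runs the same ladder and $\pi_0$ argument in reverse. I expect the Five Lemma step to be routine; the genuine obstacle is the $\pi_0$-level bookkeeping — ensuring the exact sequences behave at their bottom ends and that the two component-counting conditions really correspond — which is exactly where the hypotheses that $\wt Y_1$ is exhaustive in all of $Y_1$ and that a \icq\ can be detected one path component of the base at a time get used.
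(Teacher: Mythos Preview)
Your approach is essentially the same as the paper's: both consider the map of triples $f:(X,f^{-1}(Y_1),F_y)\to (Y,Y_1,y)$, compare the induced long exact sequences via the resulting commutative ladder, use that $Y_1$ being \icd\ with respect to $\wt Y_1$ makes the $(X_1,F_y)\to (Y_1,y)$ columns bijective, and conclude by the Five Lemma. The paper's proof is a two-line sketch that does not spell out the $\pi_0$ bookkeeping you work through; your added detail there is reasonable and does not change the method.
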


 \begin{proof} For $y\in \wt Y_1$,
  consider the following map of triples 
  $$f:(X, f^{-1}(Y_1), f^{-1}(y))\to (Y, Y_1, y).$$ Then,
  the proof follows by applying the Five Lemma on the commutative
  diagram  of the long exact sequences 
  of homotopy groups for triples, induced by the above map of
  triples.\end{proof}

 The following lemma is analogous to 
  Proposition 4K.1 of \cite{Hat03}, for our variation of the
  definition of a $k$-equivalence.
  
 \begin{lemma}\label{n-equiv}  Let $f:(X; X_1, X_2)\to (Y; Y_1, Y_2)$ be a map
   of excisive triads. Let $\wt X_{12}\subset X_1\cap X_2$, $\wt X_1\subset X_1$ and
   $\wt X_2\subset X_2$ be exhaustive subsets. Assume that,  
$f:(X_i, X_1\cap X_2)\to (Y_i, Y_1\cap Y_2)$ is a $k$\hyp{}$c$\hyp{}equivalence 
with respect to $\wt X_{12}$, for $i=1,2$. Then $f:(X, X_i)\to (Y, Y_i)$ 
is a $k$\hyp{}$c$\hyp{}equivalence with respect to $\wt X_i$, 
for $i=1,2$.\end{lemma}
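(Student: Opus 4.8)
The plan is to mimic the classical excision‑type argument (Blakers--Massey / Proposition 4K.1 in \cite{Hat03}), but keeping careful track of the distinguished exhaustive subsets, since our $k$\hyp{}$c$\hyp{}equivalences only see the fibers over $\wt X_i$. By symmetry it suffices to treat $i=1$, i.e.\ to show that $f\colon(X,X_1)\to(Y,Y_1)$ is a \kce\ with respect to $\wt X_1$. The $\pi_0$ condition is a purely set‑theoretic diagram chase from the two given $\pi_0$ conditions together with $Y=\mr Y_1\cup\mr Y_2$ (so $\pi_0(Y)$ is a pushout of $\pi_0(Y_1)$ and $\pi_0(Y_2)$ over $\pi_0(Y_1\cap Y_2)$, and similarly for $X$); I would dispose of it first. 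The substance is the claim that for each $x\in\wt X_1$ the map $f_*\colon\pi_q(X,X_1,x)\to\pi_q(Y,Y_1,f(x))$ is bijective for $1\le q\le k-1$ and surjective for $q=k$.

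For that I would run the usual compression argument on representatives. Given a map of pairs $(D^q,\p D^q)\to(Y,Y_1)$ sending the basepoint of $\p D^q$ to $f(x)$, subdivide $D^q$ finely enough that each small cube maps into $\mr Y_1$ or into $\mr Y_2$; then push the ``$Y_2$‑cubes'' off $Y\setminus Y_1$ using a homotopy supported away from $Y_1$, and lift the resulting compressed representative through $f$ one cube at a time, at each stage invoking the hypothesis that $f\colon(X_2,X_1\cap X_2)\to(Y_2,Y_1\cap Y_2)$ is a \kce\ with respect to $\wt X_{12}$ to lift the portions landing in $Y_2$, and that $f\colon(X_1,X_1\cap X_2)\to(Y_1,Y_1\cap Y_2)$ is a \kce\ (rel $\wt X_{12}$) for the rest. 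Injectivity for $q\le k-1$ is the same argument applied to a homotopy $D^q\times I$, using the next dimension's hypothesis. The only delicate bookkeeping is that all the liftings produced along the way must have their ``fiber data'' controlled over points of $\wt X_1$ (resp.\ $\wt X_{12}$): here one uses that $\wt X_{12}\subset \wt X_1\cap\wt X_2$ is exhaustive in $X_1\cap X_2$, so within each path component the required basepoints can be chosen inside $\wt X_{12}$, and that $\wt X_1$ is exhaustive in $X_1$, so the statement ``for all $x\in\wt X_1$'' is unaffected by replacing a basepoint by a path‑connected one.

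An alternative, and probably cleaner, route is to reduce directly to \cite{Hat03}, Proposition 4K.1 via Remark \ref{kce}: since $\wt X_{12}$, $\wt X_1$, $\wt X_2$ are exhaustive, a \kce\ with respect to an exhaustive subset is an honest $k$\hyp{}equivalence in the standard sense. Thus the hypotheses say $f\colon(X_i,X_1\cap X_2)\to(Y_i,Y_1\cap Y_2)$ is a $k$\hyp{}equivalence for $i=1,2$, and Proposition 4K.1 of \cite{Hat03} yields that $f\colon(X,X_i)\to(Y,Y_i)$ is a $k$\hyp{}equivalence, which (again being stated for all basepoints) is in particular a \kce\ with respect to $\wt X_i$. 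One still has to check the $\pi_0$ clause by hand, as above, and one has to be slightly careful that \cite{Hat03} states 4K.1 for connected spaces / a single basepoint — so I would apply it path‑component by path‑component of $X_i$, which is exactly what the exhaustiveness of $\wt X_i$ lets me do.

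The main obstacle is the compression/lifting step in the first approach — ensuring that the subdivision, the push‑off of the $Y_2$‑part, and the cube‑by‑cube lift can all be carried out so that the produced homotopies are rel the basepoint and, crucially, so that no basepoint falls outside the exhaustive subsets where our $k$\hyp{}$c$\hyp{}equivalence hypotheses have content. If I take the reduction route via \cite{Hat03}, the obstacle disappears into that citation and the only work left is the (routine) $\pi_0$ diagram chase and the (routine) remark that exhaustiveness upgrades ``$c$''-equivalences to genuine $k$\hyp{}equivalences; I would present the proof that way for brevity, noting the compression argument as the underlying mechanism.
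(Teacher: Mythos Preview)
Your proposal is correct, and your ``alternative, cleaner route'' via Remark~\ref{kce} and Proposition~4K.1 of \cite{Hat03} is exactly the paper's proof: the paper uses exhaustiveness of $\wt X_{12}$ to upgrade the hypotheses to honest $k$\hyp{}equivalences, cites 4K.1, and observes that the resulting $k$\hyp{}equivalence is in particular a \kce\ with respect to $\wt X_i$. The paper does not carry out your first compression argument at all, nor does it linger over the $\pi_0$ clause or the component\hyp{}by\hyp{}component application of 4K.1 that you flag; your added care on those points is harmless and arguably more complete, but the core strategy is the same.
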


\begin{proof} First note that, since $\wt X_{12}$ is exhaustive
  in $X_1\cap X_2$, $f:(X_i, X_1\cap X_2)\to (Y_i, Y_1\cap Y_2)$ is a
  $k$\hyp{}equivalence, for $i=1,2$, by Remark \ref{kce}. Therefore,
  by Proposition 4K.1 of \cite{Hat03}, $f:(X, X_i)\to (Y, Y_i)$ 
  is a $k$\hyp{}equivalence,
  and hence, in particular, $k$\hyp{}$c$\hyp{}equivalence
with respect to $\wt X_i$, for $i=1,2$.\end{proof}

Now, we can prove Proposition \ref{coveringprop}.

\begin{proof}[Proof of Proposition \ref{coveringprop}] First note that, 
  since $Y_1\cap Y_2$ is \icd\ with respect to $\wt Y_{12}$, for $i=1,2$,
  $f:(f^{-1}(Y_i), F_y)\to (Y_i, y)$ is a weak equivalence for all $y\in \wt Y_{12}$.
  See Corollary \ref{relation}.
  Hence by Lemma \ref{funda}, for $i=1,2$,
  $$f:(f^{-1}(Y_i),f^{-1}(Y_1)\cap f^{-1}(Y_2))\to (Y_i, Y_1\cap Y_2)$$ is a
  \cwe\ with respect to $f^{-1}(\wt Y_{12})$. Now using 
  Lemma \ref{n-equiv}, we get that $f:(X, f^{-1}(Y_i))\to (Y, Y_i)$ is
  a \cwe\
  with respect to $f^{-1}(\wt Y_i)$ for $i=1,2$. Applying the converse of
  Lemma \ref{funda} we get that $f:(X, F_y)\to (Y,y)$ is a weak
  equivalence for all $y\in \wt Y_i$, $i=1,2$, and hence for
  all $y\in \wt Y_1\cup \wt Y_2$. This completes the proof of the
proposition.\end{proof}

\section{Proofs of the Theorems}

We begin with the proof of Theorem \ref{covering} which says that a 
\icq\ can be deduced from local data.

  \begin{proof}[Proof of Theorem \ref{covering}]
    The proof is a consequence of Lemma \ref{directlimit},
    Proposition \ref{coveringprop} and the Zorn's lemma.

Let ${\cal O}=\{U_{\alpha}\}_{\alpha\in J}$ be the open covering of $Y$
and $\{\wt U_{\alpha}\}_{\alpha\in J}$, be the exhaustive subsets of $\cal O$.
Recall that $\cup_{\alpha\in J}\wt U_{\alpha}$ is denoted by $\wt Y$.
Next, we define a set $\cal U$ as follows.
$${\cal U}=\{V_K\ |\ V_K\subset Y,\ V_K=
\cup_{\alpha\in K\subset J}U_{\alpha},\ \text{for some}\ K\subset J,\ V_K\
\text{is \icd\ with}$$
$$\text{respect to}\ \wt V_K:=\cup_{\alpha\in K} \wt U_{\alpha},\ \text{and for}\
K_1,K_2\subset J,\ V_{K_1}\cap V_{K_2}\ \text{is \icd\ 
  with}$$
$$\text{respect to}\ \cup_{\alpha\in K_1\cap K_2}\wt U_{\alpha} \}.$$

By hypothesis, $\cal U$ is nonempty, since it contains $\cal O$.
Now, we partially order $\cal U$ using set inclusions of its members.
Then, given any chain $\cal C$ in $\cal U$, Lemma \ref{directlimit} shows that
$\cup_{V\in {\cal C}}V$ is \icd\ with respect to
$\cup_{V\in {\cal C}}\wt V$.
This is because any compact subset of $\cup_{V\in {\cal C}}V$
is contained in one of the members of $\cal C$.

Therefore, by the Zorn's lemma $\cal U$ has a maximal element, say $V_{K_0}$. We claim
that $V_{K_0}=Y$. If not, then choose $U\in {\cal O}$ such that $U$ is not contained in
$V_{K_0}$. Now since, ${\cal O}\subset \cal U$, by definition of $\cal U$, $V_{K_0}\cap U$
is \icd\ with respect to $(\cup_{\alpha\in K_0}\wt U_{\alpha})\cap \wt U$.
Next, using Proposition \ref{coveringprop}, we get that $V_{K_0}\cup U$ is a \icd\
with respect to $(\cup_{\alpha\in K_0}\wt U_{\alpha})\cup \wt U$. Hence $V_{K_0}\cup U$
is an element of $\cal U$, which is a contradiction to the maximality of $V_{K_0}$.

Therefore, $V_{K_0}=Y$ and hence $f$ is a \icq\ with respect
to the exhaustive subset $\cup_{\alpha\in K_0}\wt U_{\alpha}$. This
completes the proof of the theorem.\end{proof}

Next, we prove Theorem \ref{limit} which says that under a colimit \kcq\ 
is preserved.

\begin{proof} [Proof of Theorem \ref{limit}]
     Theorem \ref{limit} is an immediate application of Lemma \ref{directlimit}. We
     just have to note that under the colimit topology on $Y$,
     induced by the filtration $\{Y_i\}_{i\in {\Bbb N}}$ of $T_1$ subspaces, any compact subset
     of $Y$ is contained in one of the members of the filtration.\end{proof}

   Recall that Theorem \ref{deformationthm} says that a certain
   kind of deformation preserves \icq.
   
   \begin{proof}[Proof of Theorem \ref{deformationthm}] Let us first recall the notations and
     the hypothesis.
     
     $f:X\to Y$ is a surjective map, $Y_1\subset Y$ and $X_1=f^{-1}(Y_1)$.
     $\wt Y\subset Y$ and $\wt Y_1\subset Y_1$ are 
     exhaustive subsets, such that $\wt Y_1\subset \wt Y$. $(H,h)$ is a deformation from $f$ to
     $f_1=f|_{X_1}:X_1\to Y_1$ such that $h_1(\wt Y)\subset \wt Y_1$.
     Furthermore, for all $y\in \wt Y$, $H_1|_{f^{-1}(y)}:f^{-1}(y)\to f^{-1}(h_1(y))$ is a
     weak equivalence.
     
     Now, consider Diagrams 10 and 11 for all $y\in \wt Y$.
       
       Note that, since $H_1|_{f^{-1}(y)}:f^{-1}(y)\to f^{-1}(h_1(y))$ is a weak equivalence, an
         application of the Five Lemma to the long exact sequence of
         homotopy groups of pairs proves that the composition $i\circ H_1$ is a
         weak equivalence. Next, note that since
         $H$ is a deformation, the inclusion map $i$ is a weak equivalence.
         Therefore, we conclude that the vertical map $H_1$ in
         Diagram 10 is a weak equivalence.

         \centerline{
           \xymatrix{(X, f^{-1}(y))\ar[d]^{H_1}\ar[rd]&\\
           (X_1, f^{-1}(h_1(y)))\ar[r]^{i}&(X, f^{-1}(h_1(y)))}}

       \centerline{Diagram 10}
       
         Hence, in Diagram 11, the top horizontal map is a weak equivalence.
         Also, the right hand side vertical map is a weak equivalence, since $Y_1$
         is \icd\ with respect to $\wt Y_1$. Finally, the below horizontal map
         is a weak equivalence since $h$ is a deformation.

         \centerline{
           \xymatrix{(X, f^{-1}(y))\ar[r]^{\!\!\!\! \!\! H_1}\ar[d]^f&(X_1, f^{-1}(h_1(y)))\ar[d]^{f_1}\\
             (Y, y)\ar[r]^{h_1}&(Y_1, h_1(y))}}

         \centerline{Diagram 11}
         
             Therefore, we conclude that the left hand side vertical map in Diagram 11 is a
         weak equivalence for all $y\in \wt Y$, that is $f$ is a \icq\ with respect to $\wt Y$.

         This completes the proof of the theorem.\end{proof}

       \newpage
\bibliographystyle{plain}

\begin{thebibliography}{60}

\bibitem{DT58}
  Dold, A., Thom, R.: 
  \newblock Quasifaserungen und unendliche symmetrische Produkte.
  \newblock  Ann. of Math., Second Series {\bf 67}, 239-281 (1958)

  \bibitem{Hat03}
  Hatcher, A.:
  \newblock Algebraic Topology.
  \newblock Cambridge University Press. First south Asian edition. (2003)

\bibitem{Hem76}
  Hempel, J.:
  \newblock $3$\hyp{}manifolds.
  \newblock Ann. of Math. Studies 86. Princeton University Press, Princeton, N.J. (1976)
  
\bibitem{May88}
  May, J.P.:
  \newblock Weak equivalences and quasifibrations.
  \newblock In: Groups of Self-equivalences and Related Topics, pp. 91-101. Lecture
  Notes in Mathematics, {\bf 1425}. Eds. A. Dold, B. Eckmann and F. Takens. Springer-Verlag,
  Proceedings, Montreal (1988)

\bibitem{Moe02}
  I. Moerdijk,
  \newblock Orbifolds as groupoids: an introduction,
  \newblock {\em Orbifolds in mathematics and physics} (Madison, WI, 2001), 205-222,
  \newblock Contemp. Math., 310, {\it Amer. Math. Soc., Providence, RI}, (2002)
  
\bibitem{Rou20}
Roushon, S.K.: 
  \newblock Configuration Lie groupoids and orbifold braid groups. 
  \newblock Bull. Sci. math. {\bf 171} (2021) 35p.
  \newblock doi:10.1016/j.bulsci.2021.103028

\bibitem{Rou21}
  Roushon, S.K.: 
  \newblock Quasifibrations in configuration Lie groupoids and
  orbifold braid groups. Preprint.
  \newblock https://doi.org/10.48550/arXiv:2106.08110

\bibitem{Rou22}
  Roushon, S.K.:
  \newblock Almost\hyp{}quasifibrations and fundamental groups of orbit configuration
  spaces.
  \newblock https://doi.org/10.48550/arXiv:2111.06159

  \bibitem{Sco83}
    P. Scott,
    \newblock The geometries of 3-manifolds,
    \newblock {\it Bull. London Math. Soc.} 15 no. 5, 401-487 (1983)
  
\end{thebibliography}
\ifx\undefined\bysame
\newcommand{\bysame}{\leavevmode\hbox to3em{\hrulefill},}
\fi

\end{document}